\documentclass[12pt]{article}
\usepackage{amsfonts,amsmath,amscd,amssymb,amsthm, graphics,color}

\textheight 21.6cm \textwidth 14cm

\newfont{\eufm}{eufm10 scaled\magstep1}

\newcommand{\cA}{\mathcal{A}}
\newcommand{\cC}{\mathcal{C}}

\newcommand{\cD}{\mathcal{D}}

\newcommand{\cV}{\mathcal{V}}

\newcommand{\cP}{\mathcal{P}}
\newcommand{\cF}{\mathcal{F}}

\newcommand{\cL}{\mathcal{L}}
\newcommand{\cG}{\mathcal{G}}

\newcommand{\cR}{\mathcal{R}}

\newcommand{\cX}{\mathcal{X}}

\newcommand{\cK}{\mathcal{K}}

\newcommand{\bbN}{\mathbb{N}}

\newcommand{\bbQ}{\mathbb{Q}}

\newcommand{\bbK}{\mathbb{K}}

\newcommand{\bbD}{\mathbb{D}}

\newtheorem{thm}{Theorem}[section]
\newtheorem{lem}[thm]{Lemma}
\newtheorem{cor}[thm]{Corollary}
\newtheorem{prop}[thm]{Proposition}
\newtheorem{defi}[thm]{Definition}
\newtheorem{rem}[thm]{Remark}

\newtheorem{alg}[thm]{Algorithm}

\def\para{\vspace{2mm}}
\def\id{{\rm ID}}
\def\dprim{\id{\rm prim}}

\def\dcont{\id{\rm cont}}

\def\dcres{\partial{\rm CRes}}
\def\ps{{\rm PS}}

\def\rank{{\rm rank}}
\def\ord{{\rm ord}}
\def\lead{{\rm lead}}
\def\prem{{\rm prem}}
\def\min{{\rm min}}

\def\gcd{{\rm gcd}}

\def\ker{{\rm Ker}}

\def\gcld{{\rm gcld}}
\def\c{{\rm c}}
\def\co{{\rm co}}
\def\l{{\rm l}}

\begin{document}
\title{A perturbed differential resultant based implicitization algorithm for linear DPPEs\thanks{
Supported by the Spanish ``Ministerio de Ciencia e Innovaci\' on"
under the Project MTM2008-04699-C03-01}}

\author{Sonia L. Rueda   \\
Dpto de Matem\' atica Aplicada, E.T.S. Arquitectura.\\
Universidad Polit\' ecnica de Madrid.\\
Avda. Juan de Herrera 4, 28040-Madrid, Spain.\\
{sonialuisa.rueda@upm.es}}

\date{}
\maketitle

\begin{abstract}
Let $\bbK$ be an ordinary differential field with derivation
$\partial$.
Let $\cP$ be a system of $n$ linear differential
polynomial parametric equations in $n-1$ differential parameters with implicit ideal $\id$. Given a nonzero linear differential polynomial $A$ in $\id$ we give necessary and sufficient conditions on $A$ for $\cP$ to be $n-1$ dimensional.
We prove the existence of a linear perturbation
$\cP_{\phi}$ of $\cP$ so that the linear complete differential resultant $\dcres_{\phi}$ associated to $\cP_{\phi}$ is nonzero. A nonzero linear differential polynomial in $\id$ is obtained from the lowest degree term of $\dcres_{\phi}$ and used to provide an implicitization algorithm for $\cP$.
\end{abstract}

\section{Introduction}
The use of algebraic elimination techniques such as Groebner basis and multivariate resultants to obtain the implicit equation of a unirational algebraic variety is well known (see for instance \cite{Cox}, \cite{Cox-2}). The development of similar techniques in the differential case is an active field of research. In \cite{Gao} characteristic set methods were used to solve the differential implicitization problem for differential rational parametric equations and alternative methods are emerging to treat special cases. In \cite{RS}, linear complete differential resultants were used to compute the implicit equation of a set of linear differential polynomial parametric equations (linear DPPEs). As in the algebraic case differential resultants often vanish under specialization and we are left with no candidate to be the implicit equation. This reason prevented us from giving an algorithm for differential implicitization in \cite{RS}. Motivated by Canny's method in \cite{Can} and its generalizations in \cite{DAE} and \cite{Roj} in the present work we consider a linear perturbation of a given system of linear DPPEs and use linear complete differential resultants to give a candidate to be the implicit equation of the system.

\para

Given a system $\cP(X,U)$ of $n$ linear ordinary differential polynomial parametric equations $x_1=P_1(U),\ldots ,x_n=P_n(U)$
in $n-1$ differential parameters $u_1,\ldots ,u_{n-1}$ (we give a precise statement of the problem in Section \ref{sec-Basic notions and notation}) we give an algorithm to decide if the dimension of the implicit ideal $\id$ of $\cP$ is $n-1$ and in the affirmative case provide the implicit equation of $\cP$.

\para

The linear complete differential resultant $\dcres (x_1-P_1(U),\ldots ,x_n-P_n(U))$ is the
algebraic resultant of Macaulay of a set of differential polynomials with $L$ elements.
It was defined in \cite{RS} as a generalization of Carra'Ferro's differential resultant \cite{CFproc} (in the linear case) in order to adjust the number $L$ of differential polynomials to the order of derivation of the variables $u_1,\ldots ,u_n$ in $F_i=x_i-P_i(U)$.

\para

In this paper, we provide a perturbation $\cP_{\phi} (X,U)$ of $\cP (X,U)$ so that the linear differential polynomials
$F_1-p\,\phi_1(U), \ldots, F_n-p\,\phi_n(U)$
have nonzero linear complete differential resultant $\dcres_{\phi}(p)$ which is a polynomial depending on $p$. It will be shown that the coefficient of the lowest degree term of $\dcres_{\phi}(p)$ is a nonzero linear differential polynomial that belongs to the implicit ideal $\id$ of $\cP (X,U)$. In fact, if $\dcres_{\phi}(p)$ has a nonzero independent term it equals $\dcres (F_1,\ldots ,F_n)$ and as proved in \cite{RS} it gives the implicit equation of $\cP (X,U)$.

\para

The main result of this paper generalizes the result previously mentioned from \cite{RS}. Given a nonzero linear differential polynomial $A$ in $\id$ necessary and sufficient conditions on $A$ are provided so that $A(X)=0$ is the implicit equation of $\cP(X,U)$. The higher order terms in the equations of $\cP(X,U)$ and the rank of the coefficient matrix of the set of $L$ polynomials used to construct the differential resultant $\dcres (F_1,\ldots ,F_n)$ play a significant role in this theory. The fact that we are dealing with linear differential polynomials will be also relevant, allowing us to treat then using differential operators.

\para

The paper is organized as follows. In Section \ref{sec-Basic
notions and notation} we introduce the main notions and notation.
Next we review the definition of the linear complete differential (homogeneous) resultant
in Section \ref{sec-Linear gamma-Differential Resultants}.
The main definitions regarding linear differential polynomials in $\id$ are given in Section \ref{sec-Implicit Ideal}. The next section contains the main result of the paper, namely a characterization of $\id$ in the $n-1$ dimensional case is provided in Section \ref{sec-dimension ID}. In Section \ref{sec-Linear perturbation} we give a perturbation $\cP_\phi(X,U)$ of $\cP(X,U)$ with nonzero differential resultant and use it to obtain a nonzero linear differential polynomial in $\id$ candidate to provide the implicit equation. Finally in Section \ref{sec-Implicitization algorithm} we give the implicitization algorithm and examples.

\section{Basic notions and notation}\label{sec-Basic notions and
notation}

In this section, we introduce the basic notions related to the
problem we deal with (as in \cite{RS08} and \cite{RS}), as well as notation and terminology used
throughout the paper. For further concepts and results on
differential algebra we refer to  \cite{Kol} and \cite{Ritt}.

\para

Let $\bbK$ be an ordinary differential field with derivation
$\partial$, ( e.g. $\bbQ(t)$, $\partial=\frac{\partial}{\partial
t}$). Let $X=\{x_1,\ldots ,x_n\}$ and $U=\{u_1,\ldots ,u_{n-1}\}$
be  sets of differential indeterminates over $\bbK$. Let $\bbN_0=\{0,1,2,\ldots ,n,\ldots\}$. For
$k\in\bbN_0$ we denote by $x_{ik}$ the $k$-th derivative of $x_i$, for $x_{i0}$ we simply write $x_i$.
Given a set $Y$ of differential indeterminates over $\bbK$ we
denote by $\{Y\}$ the set of derivatives of the elements of $Y$,
$\{Y\}=\{\partial^k y\mid y\in Y,\; k\in \bbN_0\}$, and  by $\bbK
\{X\}$ the ring of differential polynomials in the differential
indeterminates $x_1,\ldots ,x_n$, that is
\begin{displaymath}
\bbK\{X\}=\bbK[x_{ik}\mid i=1,\ldots ,n,\; k\in \bbN_0 ].
\end{displaymath}
Analogously for $\bbK \{U\}$.

\para

As defined in \cite{RS} we consider the system of linear DPPEs
\begin{equation}\label{DPPEs}
\cP(X,U) = \left\{\begin{array}{ccc}x_1 &= & P_1 (U)\\ & \vdots  & \\
x_n&= & P_n (U).\end{array}\right.
\end{equation}
where $P_1,\ldots ,P_n\in \bbK \{U\}$ with degree at most $1$ and not all $P_i\in \bbK$, $i=1,\ldots ,n$.
There exists $a_i\in\bbK$ and an homogeneous differential polynomial $H_i\in \bbK \{U\}$ such that
\[
F_i(X,U)=x_i-P_i(U)= x_i-a_i+H_{i}(U).
\]

Given $P\in \bbK \{X\cup U\}$ and $y\in X\cup U$, we denote by $\ord(P,y)$ the {\sf
order} of $P$ in the variable $y$. If $P$ does not have
a term in $y$ then we define $\ord(P,y)=-1$. To ensure that the number of parameters is $n-1$, we assume that
for each $j\in\{1,\ldots ,n-1\}$ there exists $i\in\{1,\ldots
,n\}$ such that $\ord (F_i, u_j)\geq 0$.

\para

The {\sf implicit ideal} of the system \eqref{DPPEs} is the differential prime ideal
\begin{equation*}
\id =\{f\in \bbK \{X\}\mid f(P_1 (U),\ldots ,P_n
(U))=0\}.
\end{equation*}
Given a characteristic set $\cC$ of $\id$ then $n-\mid \cC \mid$ is the (differential)
dimension of $\id$, by abuse of notation, we will also speak about
the dimension of a DPPEs system meaning the dimension of its
implicit ideal.

\para

If $\dim(\id)=n-1$, then $\cC=\{A(X)\}$ for some irreducible
differential polynomial $A\in\bbK\{X\}$. The polynomial $A$  is
called a {\sf characteristic polynomial} of $\id$.
The {\sf implicit equation} of a $(n-1)$-dimensional
system of DPPEs, in $n$ differential indeterminates $X=\{x_1,\ldots,x_n\}$,
is defined as the equation
$A(X)=0$, where $A$ is any characteristic polynomial of the
implicit ideal $\id$ of the system.

\para

Let $\bbK [\partial]$ be the ring  of differential operators with
coefficients in $\bbK$. If $\bbK$ is not a field of constants with respect to $\partial$,
then $\bbK [\partial]$ is not commutative but
$\partial k-k\partial=\partial (k)$
for all $k\in \bbK$. The ring $\bbK [\partial]$ of differential
operators with coefficients in $\bbK$ is left euclidean (and also
right euclidean). Given $\cL,\cL'\in\bbK [\partial]$, by applying
the left division algorithm we obtain $q,r\in\bbK [\partial]$,
the left quotient and the left reminder of $\cL$ and $\cL'$
respectively, such that $\cL=\cL' q+r$ where $\deg
(r)<\deg(\cL')$.

\section{Linear complete differential resultants}\label{sec-Linear gamma-Differential Resultants}

We review next the results on linear complete differential resultants from \cite{RS} that
will be used in this paper.

\para

Let $\bbD$ be a differential integral domain, and let
$f_i\in\bbD\{U\}$ be a linear ordinary differential polynomial of
order $o_i$, $i=1,\ldots ,n$.
For each $j\in \{1,\ldots ,n-1\}$ we define the positive integers
\begin{align*}
&\gamma_j (f_1,\ldots ,f_n):=\min\{o_i-\ord (f_i,u_j)\mid i\in\{1,\ldots ,n\}\},\\
&\gamma (f_1,\ldots ,f_n):=\sum_{j=1}^{n-1} \gamma_j(f_1,\ldots ,f_n).
\end{align*}
Let $N=\sum_{i=1}^n o_i$ then the {\sf completeness index} $\gamma (f_1,\ldots ,f_n)$
verifies $\gamma (f_1,\ldots ,f_n)\leq N-o_i$, for all $i\in\{1,\ldots ,n\}$.

The {\sf linear complete differential resultant}
$\dcres (f_1,\ldots,f_n)$ is the Macaulay's algebraic
resultant of the differential polynomial set
\begin{align*}
&\ps(f_1,\ldots ,f_n):=\\
&\{\partial^{N-o_i-\gamma} f_i,\ldots ,\partial f_i, f_i \mid
i=1,\ldots ,n,\,\,\, \gamma =\gamma(f_1,\ldots ,f_n)\}.
\end{align*}
The set $\ps (f_1,\ldots ,f_n)$ contains
$L=\sum_{i=1}^n (N-o_i-\gamma+1)$ polynomials in the following set
$\cV$ of $L-1$ differential variables
\begin{equation*}\label{eq-V}
\cV =\{u_j,u_{j1}\ldots ,u_{j N-\gamma_j-\gamma} \mid \gamma_j=\gamma_j
(f_1,\ldots ,f_n), j=1,\ldots ,n-1\}.
\end{equation*}

\para

Let $h_i\in\bbD\{U\}$ be a linear ordinary
differential homogeneous polynomial of order $o_i$, $i=1,\ldots ,n$ with $N=\sum_{i=1}^no_i\geq 1$.
We define the differential polynomial set
\begin{align*}
&\ps^h(h_1,\ldots ,h_n):=\{\partial^{N-o_i-\gamma -1}h_i,\ldots ,\partial h_i, h_i \mid\\
&i\in\{1,\ldots ,n\},\,\,\,N-o_i-\gamma-1\geq 0,\,\,\, \gamma =\gamma
(h_1,\ldots ,h_n)\}.
\end{align*}
Observe that $N\geq 1$ implies $\ps^h(h_1,\ldots ,h_n)\neq\emptyset$.
The {\sf linear complete differential homogenous resultant}
$\dcres^h (h_1,\ldots ,h_n)$ is the Macaulay's algebraic resultant of the set $\ps^h(h_1,\ldots ,h_n)$.
The set $\ps^h(h_1,\ldots ,h_n)$ contains
$L^h=\sum_{i=1}^n (N-o_i-\gamma)$ polynomials in the set
$\cV^h$ of $L^h$ differential variables
\begin{align*}
\cV^h =\{u_j,u_{j1}\ldots ,u_{j N-\gamma_j-\gamma -1}\mid
\gamma_j=\gamma_j (h_1,\ldots ,h_n), j=1,\ldots ,n-1\}.
\end{align*}

\para

We review next the matrices that will allow the use of
determinants to compute $\dcres (f_1,\ldots ,f_n)$ and $\dcres^h
(h_1,\ldots ,h_n)$. The order $u_1<\cdots <u_{n-1}$ induces an orderly ranking on $U$ (i.e. and order on $\{U\}$) as follows
(see \cite{Kol}, page 75): $u<\partial u$, $u<u^\star\Rightarrow
\partial u <\partial u^\star$ and $k<k^\star\Rightarrow\partial^k u <
\partial^{k^\star} u^\star$, for all $u, u^\star\in U$, $k,k^\star\in\bbN_0$. We
set $1<u_1$.

For $i=1,\ldots ,n$, $\gamma=\gamma
(f_1,\ldots ,f_n)$ and $k=0,\ldots ,N-o_i-\gamma$
define de positive integer
$l(i,k)=(i-1)(N-\gamma)-\sum_{h=1}^{i-1}o_i+i+k$ in $\{1,\ldots ,L\}$.
The {\sf complete differential resultant matrix}  $M(L)$ is the $L\times L$ matrix containing the
coefficients of $\partial^{N-o_i-\gamma -k}f_i$ as a polynomial in $\bbD[\cV]$
in the $l(i,k)$-th row, where the coefficients are written in
decreasing order with respect to the orderly ranking on $U$.
In this situation:
\begin{align*}
\dcres (f_1,\ldots ,f_n)&=\det(M(L)).
\end{align*}

If $N\geq 1$, for $\gamma=\gamma (h_1,\ldots ,h_n)$, $i\in \{1,\ldots ,n\}$ such that $N-o_i-\gamma-1\geq 0$ and
$k=0,\ldots ,N-o_i-\gamma-1$ define the positive integer $l^h(i,k)=(i-1)(N-\gamma-1)-\sum_{h=1}^{i-1}o_i+i+k$
in $\{1,\ldots ,L^h\}$. The {\sf complete differential homogeneous
resultant matrix} $M(L^h)$ is the $L^h\times L^h$ matrix containing the
coefficients of $\partial^{N-o_i-\gamma -k-1} h_i$ as a polynomial in $\bbD[\cV^h]$
in the $l^h(i,k)$-th row, where the coefficients are written in
decreasing order with respect to the orderly ranking on $U$.
In this situation:
\begin{align*}
\dcres^h (h_1,\ldots ,h_n)&=\det(M(L^h)).
\end{align*}

Throughout the remaining parts of the paper we will say differential ( homogeneous) resultant always meaning linear complete differential ( homogeneous) resultant.

\subsection{Linear complete differential resultants from linear DPPEs}\label{subsec-Linear gamma-Differential Resultants}

We highlight in this section some facts on differential resultants of
the differential polynomials $F_i$ and $H_i$ obtained from a system of linear DPPEs as
in Section \ref{sec-Basic notions and notation}.

\para

Let $\gamma=\gamma (F_1,\ldots ,F_n)=\gamma (H_1,\ldots
,H_n)$ and $\bbD=\bbK\{X\}$. Then $\dcres (F_1,\ldots ,F_n)$ and $\dcres^h (H_1,\ldots ,H_n)$ are closely related
as shown in \cite{RS}, Section 5.
Since $F_i(X,U)= x_i-a_i+H_{i}(U)$, if $N\geq 1$ the matrix $M(L^h)$ is a submatrix of $M(L)$ obtained removing $n$ specific rows and columns.
This fact together with the identities below allowed to prove that (when $N\geq 1$)
\[\dcres (F_1,\ldots ,F_n)=0 \Leftrightarrow \dcres^h (H_1,\ldots ,H_n)=0.\]
The next matrices will play an important role in the remaining parts of the paper.
\begin{itemize}
\item Let $S$ be the $n\times (n-1)$ matrix whose entry $(i,j)$ is the coefficient of
$u_{n-j\, o_i-\gamma_j}$ in $F_i$, $i\in\{1,\ldots ,n\}$,
$j\in\{1,\ldots ,n-1\}$. We call $S$ the {\sf leading matrix} of $\cP(X,U)$.
For $i\in\{1,\ldots n\}$ let $S_i$ be
the $(n-1)\times (n-1)$ matrix obtained by removing the $i$-th row of $S$.

\item Let $M_{L-1}$ be the $L\times (L-1)$ principal submatrix of
$M(L)$. We call $M_{L-1}$ the {\sf principal matrix} of $\cP(X,U)$.
\end{itemize}

\para

Let $\cX=\{x_i,x_{i 1},\ldots ,x_{i\, N-o_i-\gamma}\mid i=1,\ldots ,n\}$. Given $x\in \cX$, say
$x=x_{ik}$ with $k\in\{0,1,\ldots ,N-o_i-\gamma\}$, let us call $M_x$ the
submatrix of $M_{L-1}$ obtained by removing the row corresponding
to the coefficients of
$\partial^{k}F_i=x_{ik}+\partial^k(H_i(U)-a_i)$. Then, developing
the determinant of $M(L)$ by the last column we obtain
\begin{equation}\label{eq-dResMx}
\dcres (F_1,\ldots ,F_n)=\sum_{i=1}^n \sum_{k=0}^{N-o_i-\gamma} b_{ik} \det (M_{x_{ik}}) (x_{ik}-\partial^k a_i),
\end{equation}
with $b_{ik}=\pm 1$ according to the row index of $x_{ik}-\partial^k a_i$ in the matrix $M(L)$. Also,
for every $i\in\{1,\ldots ,n\}$, there exists $a\in\bbN$ such
that
\begin{equation}\label{eq-Mxi}
\det(M_{x_{i N-o_i-\gamma}})=(-1)^a \dcres^h (H_1,\ldots ,H_n) \det(S_i).
\end{equation}

\section{The implicit ideal $\id$}\label{sec-Implicit Ideal}

Let $\cP(X,U)$, $F_i$, $H_i$ be as in Section \ref{sec-Basic
notions and notation}.  Let $\ps=\ps(F_1,\ldots,F_n)$ and let $\id$ be the implicit ideal of
$\cP(X,U)$. In this section, we  review the computation of $\id$ in terms of characteristic sets (see \cite{Gao} and \cite{RS}) and give some definitions related with linear differential polynomials in $\id$ that will be important in the remaining parts of the paper.

\para

Let $\cX$ and $\cV$ be as in Section \ref{subsec-Linear gamma-Differential Resultants} and observe that $\ps\subset \bbK[\cX][\cV]$. Let $(\ps)$ be the ideal generated by $\ps$ in $\bbK[\cX][\cV]$ and let $[\ps]$ be the differential ideal generated by $\ps$ in $\bbK\{X\}$.

The order $x_n<\cdots <x_1$ induces a ranking on $X$ as follows (see \cite{Kol}, page 75):
$x<\partial x$ and $x<x^\star\Rightarrow \partial^k x < \partial^{k^\star}
x^\star$, for all $x, x^\star\in X$, $k,k^\star\in\bbN_0$.
\begin{itemize}
\item We call $\cR$ the ranking on $X\cup U$ that eliminates $X$ with
respect to $U$, that is $\partial^k x> \partial^{k^\star} u$, for all
$x\in X$, $u\in U$ and $k,k^\star\in\bbN_0$.

\item We call $\cR^\star$ the ranking on $X\cup U$ that eliminates $U$ with respect to $X$,
that is $\partial^k x< \partial^{k^\star} u$, for all $x\in X$, $u\in U$ and $k,k^\star\in\bbN_0$.
\end{itemize}

\para

Note that, because
of the particular structure of $F_i$, with respect to $\cR$ then $\ps$ is a chain (see \cite{Ritt}, page 3) of $L$
differential polynomials, with
$L$ as in Section \ref{sec-Linear gamma-Differential Resultants}.
Let $\cA$ be a characteristic set of $[\ps]$ and $\cA_0=\cA\cap\bbK\{X\}$.
By \cite{Gao} then
\[\id=[\ps]\cap \bbK\{X\}=[\cA_0].\]
To compute a characteristic set of $[\ps]$ we will use the reduced
Groebner basis of $(\ps)$ with respect to
lex monomial order induced by the ranking $\cR^\star$. We call $\cG$ the
{\sf Groebner basis associated to the system} $\cP (X,U)$.
For that purpose we apply the algorithm given in
\cite{BL}, Theorem 6, that we briefly include below for completion.

Given $P\in\bbK\{X\cup U\}$, the {\sf lead} of $P$ is the highest derivative
present in $P$ w.r.t. $\cR^\star$, we denote it by $\lead (P)$.
Given $P,Q\in \bbK\{X\cup U\}$ we denote by $\prem (P,Q)$ the
{\sf pseudo-remainder} of $P$ with respect to $Q$, \cite{Ritt}, page 7.
Given a chain $\cA=\{A_1,\ldots ,A_t\}$ of elements of $\bbK\{X\cup U\}$ then
$\prem (P,\cA)=\prem (\prem (P,A_t), \{A_1,\ldots
,A_{t-1}\})$ and $\prem (P,\emptyset)=P$.

\begin{alg}\label{alg-characteristic set}
{\sf Given} the set of differential polynomials $\ps$ the next algorithm {\sf
returns} a characteristic set $\cA$ of $[\ps]$.
\begin{enumerate}
\item Compute the reduced Groebner basis $\cG$ of $(\ps)$ with respect to
lex monomial order induced by $\cR^\star$.

\item Assume that the elements of $\cG$ are arranged in increasing
order $B_0<B_1<\cdots <B_{L-1}$ w.r.t. $\cR^\star$. Let $\cA^0=\{B_0\}$.
For $i$ from $1$ to $L-1$ do, if \lead$(B_i)\neq$\lead$(B_{i-1})$
then $\cA^{i}:=\cA^{i-1} \cup\{\mbox{\prem}(B_i,\cA^{i-1})\}$. $\cA:=\cA^{L-1}$.
\end{enumerate}
\end{alg}

\para

We are dealing with a linear system of polynomials and
computing a Groebner basis is equivalent to performing gaussian elimination.
Some details on this computation were given in \cite{RS} and we include them
below to be used further in this paper.
Let $M_{2L}$ be the $L\times (2L)$ matrix whose $k$-th row
contains the coefficients, as a polynomial in $\bbK[\cX][\cV]$, of the $(L-k+1)$-th polynomial in $\ps$ and where the
coefficients are written in decreasing order w.r.t. $\cR^\star$.

\[M_{2L}=\left[\begin{array}{cc}M_{L-1}&
\begin{array}{cccccccc}1& & & & & & &\partial^{N-o_1-\gamma} a_1\\ &\ddots & & & & & &\vdots \\ & & 1& & & & & a_1 \\  & & &\ddots & & & &\vdots \\ & & & & 1 & & &\partial^{N-o_n-\gamma} a_n\\ & & & & &\ddots & &\vdots \\  & & & & & & 1 &a_n \end{array}
\end{array}\right].\]

The polynomials corresponding to the rows of the reduced echelon form
$E_{2L}$ of $M_{2L}$ are the elements of the Groebner basis associated to $\cP(X,U)$. Let $\cG_0=\cG\cap \bbK\{X\}$. From \cite{RS}, Lemma 20(1):
\begin{lem}\label{lem-G0rev}
The Groebner basis associated to the system $\cP (X,U)$ is a set of linear
differential polynomials  $\cG=\{B_0,B_1,\ldots ,B_{L-1}\}$ where $B_0<B_1<\cdots <B_{L-1}$
with respect to the ranking $\cR^\star$ and $B_0\in\cG_0$. Hence $(\ps)\cap \bbK\{X\}=(\cG_0)$ is nonzero.
\end{lem}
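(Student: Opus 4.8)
The plan is to connect the polynomials of $\ps$, which live in $\bbK[\cX][\cV]$, to their differential counterparts in $\bbK\{X\}$ via the three-way comparison: the ideal $(\ps)$, the differential ideal $[\ps]$, and their intersections with $\bbK\{X\}$. First I would recall that computing a Groebner basis of $(\ps)$ with respect to the lex order induced by $\cR^\star$ amounts to row-reducing the matrix $M_{2L}$ to its reduced echelon form $E_{2L}$; since $\ps$ consists of $L$ linear polynomials in the $2L-1$ variables of $\cX\cup\cV$ (arranged so that the $\cX$-columns sit in the $M_{L-1}$ block on the left and the $\cV$-columns, together with the constant column, on the right), the rows of $E_{2L}$ are exactly the elements of $\cG$, and because row reduction never increases the lead, ordering the rows so that leads increase gives $B_0 < B_1 < \cdots < B_{L-1}$ with respect to $\cR^\star$. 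Each $B_k$ is linear, as it is a $\bbK$-linear combination of the linear generators in $\ps$.

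The next step is to show $B_0 \in \cG_0 = \cG \cap \bbK\{X\}$, i.e. that the smallest element of the reduced Groebner basis under $\cR^\star$ involves no $\cV$-variables. Under $\cR^\star$ every derivative of a $u_j$ is larger than every derivative of an $x_i$, so the $\cV$-variables occupy the "high" columns and the $\cX$-variables the "low" columns of $M_{2L}$. The matrix $M_{L-1}$ is the $L \times (L-1)$ principal submatrix recording the $\cX$-coefficients; by Lemma~20(1) of \cite{RS} (which we are invoking) the $\cX$-part has rank $L-1$, so after full row reduction exactly one row of $E_{2L}$ has all its $\cV$-entries equal to zero, and — because that row has the smallest possible lead, namely a derivative of some $x_i$ rather than of some $u_j$ — it is precisely $B_0$. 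Thus $B_0$ lies in $\bbK\{X\}$ and is nonzero (a reduced echelon form has no zero rows among the $L$ pivot rows, as $\ps$ is chain-ordered under $\cR$ and hence has full row rank $L$).

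It then remains to identify $(\ps) \cap \bbK\{X\}$ with $(\cG_0)$ and to conclude it is nonzero. For a reduced Groebner basis $\cG$ with respect to an elimination order that places the $\cV$-variables above the $\cX$-variables, the standard elimination-theory fact gives $(\ps) \cap \bbK[\cX] = (\cG \cap \bbK[\cX]) = (\cG_0)$ as ideals in $\bbK[\cX]$; since $\ps \subset \bbK[\cX][\cV]$ and all the $B_k$ are linear, the elements of $\cG$ that lie in $\bbK\{X\}$ lie in fact in the polynomial ring $\bbK[\cX]$, so the same identity reads $(\ps) \cap \bbK\{X\} = (\cG_0)$. Nonzeroness follows from $B_0 \in \cG_0$ together with $B_0 \neq 0$. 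The main obstacle is the bookkeeping in the second step: one must argue carefully that the unique $\cV$-free row of $E_{2L}$ is the $\cR^\star$-minimal element $B_0$ and not merely some element of $\cG_0$, which requires keeping track of how the column order of $M_{2L}$ (decreasing in $\cR^\star$) interacts with which pivot ends up in which row after reduction; the rank-$(L-1)$ statement for the $\cX$-block, quoted from \cite{RS}, is what makes the count "exactly one $\cV$-free row" come out right.
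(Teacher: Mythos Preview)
The paper does not actually prove this lemma; it simply quotes it from \cite{RS}, Lemma~20(1). So there is no proof in the paper to compare against. Judging your argument on its own merits, the overall strategy (row-reduce $M_{2L}$, use elimination theory, and argue that the $\cR^\star$-smallest element must lie in $\bbK\{X\}$) is sound, but the execution has a genuine error in the bookkeeping you yourself flag as the ``main obstacle.''

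You have the blocks of $M_{2L}$ reversed. The columns of $M_{2L}$ are written in \emph{decreasing} order with respect to $\cR^\star$, and under $\cR^\star$ every $u$-derivative dominates every $x$-derivative; hence the first $L-1$ columns --- which form $M_{L-1}$ --- carry the $\cV$-coefficients, not the $\cX$-coefficients (the $\cX$-block is the identity submatrix sitting to the right, as in the displayed form of $M_{2L}$ in the paper). Consequently your assertion that ``the $\cX$-part has rank $L-1$'' is both misplaced and circular: you cite Lemma~20(1) of \cite{RS}, which is precisely the statement you are trying to prove. The correct counting is that $M_{L-1}$, being $L\times(L-1)$, has rank at most $L-1$, so in $E_{2L}$ at least $L-\rank(M_{L-1})\geq 1$ rows have all zeros in the first $L-1$ (that is, $\cV$-) columns; those rows are exactly $\cG_0$, and any such row has its lead in $\cX$, hence is $\cR^\star$-smaller than every element of $\cG\setminus\cG_0$. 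In particular $B_0\in\cG_0$. Your ``exactly one $\cV$-free row'' claim is false in general (indeed $|\cG_0|=L-\rank(M_{L-1})$ is the content of Lemma~\ref{lem-G0}(1) later in the paper), but only ``at least one'' is needed here. Once the block identification is fixed, the rest of your argument --- full row rank $L$ from the identity block, and the elimination identity $(\ps)\cap\bbK[\cX]=(\cG_0)$ --- goes through.
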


\subsection{On linear differential polynomials in $\id$}\label{subsec-diffpolsPS}

\para

The next definitions will play an important role throughout the paper.
Given a nonzero linear differential polynomial $B$ in $[\ps ]$ there exist differential operators $\cF_i\in\bbK[\partial]$, $i=1,\ldots ,n$ such that
\[B(X,U)=\sum_{i=1}^{n} \cF_i (F_i(X,U)).\]
If $B$ belongs to $\id =[\ps ]\cap\bbK\{X\}$ then
\begin{equation*}\label{eq_AD}
B=\sum_{i=1}^n \cF_i(x_i-a_i)\mbox{ and }\sum_{i=1}^n \cF_i(H_i(U))=0.
\end{equation*}
If $B$ belongs to $(\ps)$ then $\deg(\cF_i)\leq N-o_i-\gamma$, $i=1,\ldots ,n$.

\begin{defi}
Given a nonzero linear differential polynomial $B$ in $(PS)$ with $B(X,U)=\sum_{i=1}^{n} \cF_i (F_i(X,U))$, $\cF_i\in\bbK[\partial]$.
\begin{enumerate}
\item We define the {\sf co-order} of $B$ in $(\ps )$ as the highest positive integer $\c (B)$ such that $\partial^{\c (B)} B\in (\ps)$. Observe that
\[\c (B)=\min\{N-o_i-\gamma-\ord(B,x_i)\mid i=1,\ldots ,n\}.\]

\item For $i\in\{1,\ldots ,n\}$ let $\alpha_i$ be the coefficient of $\partial^{N-o_i-\gamma-\c (B)}$ in $\cF_i$. We call $(\alpha_1,\ldots ,\alpha_n)$ the {\sf leading coefficients vector} of $B$ in $(\ps )$ and we denote it by $\l (B)$.
\end{enumerate}
\end{defi}

Let $S$ be the leading matrix of the system $\cP(X,U)$. Denote by $S^T$ the transpose matrix of $S$.

\begin{rem}\label{rem-kerSt}
The $i$-th row of $S$ equals $\l (F_i(X,U))$, $i\in \{1,\ldots ,n\}$.
Given a nonzero linear $B\in (PS)$, the $i$-th row of $S$ also consists of the coefficients of $u_{n-j\, N-\gamma_j-\gamma-\c (B)}$, $j\in \{1,\ldots
,n-1\}$ in $\partial^{N-o_i-\gamma-\c (B)}F_i (X,U)$.
Therefore if $\ord(B,u_j)<N-\gamma_j-\gamma-\c (B)$ then $\l (B)S=0$, that is $\l (B)\in\ker (S^T)$.
\end{rem}

\begin{defi}
Given a nonzero linear differential polynomial $B$ in $\id$ with $B=\sum_{i=1}^{n} \cF_i (x_i-a_i)$, $\cF_i\in\bbK[\partial]$.
\begin{enumerate}
\item We define the {\sf $\id$-content} of $B$ as the greatest common left
divisor of $\cF_{1},\ldots ,\cF_{n}$ (we write $\gcld (\cF_1,\ldots ,\cF_n)$). We denote it by $\dcont (B)$.

\item There exist $\cL_{i}\in\bbK[\partial]$ such that $\cF_{i}=\dcont (B)\cL_{i}$,
$i=1,\ldots ,n$ and $\cL_{1},\ldots ,\cL_{n}$ are coprime (we write $(\cL_1,\ldots ,\cL_n)=1$). We define the {\sf $\id $-primitive part} of $B$ as
\[\dprim (B)(X,U)=\sum_{i=1}^n \cL_i (x_i-a_i).\]

\item If $\dcont (B)\in\bbK$ then we day that $B$ is {\sf $\id $-primitive} .
\end{enumerate}
\end{defi}

If $A=\dprim (B)=\sum_{i=1}^{n} \cL_i (x_i-a_i)$ then $\c (A)\geq \deg(\dcont (B))$ and $\deg(\cL_i)\leq N-o_i-\gamma-\c (A)$, $i=1,\ldots ,n$.

\begin{lem}
Given a linear differential polynomial $B\in (\ps)\cap\bbK\{X\}$ then $\dprim (B)\in(\ps)\cap\bbK\{X\}$.
\end{lem}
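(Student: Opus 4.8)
The plan is to exhibit $\dprim(B)$ itself as an admissible $\bbK[\partial]$-combination of $F_1,\dots,F_n$. Since $B$ is a nonzero linear differential polynomial in $(\ps)\cap\bbK\{X\}$, I would begin from a representation $B=\sum_{i=1}^{n}\cF_i(F_i)$ with $\cF_i\in\bbK[\partial]$, $\deg(\cF_i)\le N-o_i-\gamma$, and, because $B\in\bbK\{X\}$, with vanishing homogeneous $U$-part $\sum_{i=1}^{n}\cF_i(H_i)=0$ (all recalled above). Put $\cD=\dcont(B)=\gcld(\cF_1,\dots,\cF_n)$ and $\cF_i=\cD\cL_i$, so that $A:=\dprim(B)=\sum_{i=1}^{n}\cL_i(x_i-a_i)$. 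The goal is then to prove $A=\sum_{i=1}^{n}\cL_i(F_i)$ and that this representation certifies $A\in(\ps)$.

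The degree bookkeeping is painless: $\bbK[\partial]$ is a domain in which $\deg$ is additive under products, so $\deg(\cL_i)=\deg(\cF_i)-\deg(\cD)\le N-o_i-\gamma$. Hence each $\cL_i(F_i)$ is a $\bbK$-linear combination of the polynomials $\partial^{k}F_i$ with $0\le k\le N-o_i-\gamma$, i.e. of elements of $\ps$, and therefore $\sum_{i=1}^{n}\cL_i(F_i)\in(\ps)$ directly from the definition of $\ps$.

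The only substantive point is that the $U$-part of $\sum_{i}\cL_i(F_i)$ vanishes, i.e. $g:=\sum_{i=1}^{n}\cL_i(H_i)=0$. Applying $\cD$ and using additivity of differential operators together with $\cD\cL_i=\cF_i$ gives $\cD(g)=\sum_{i=1}^{n}\cF_i(H_i)=0$. This does not yet force $g=0$ for a general $g\in\bbK\{U\}$ (a nonzero constant is annihilated by $\partial$), so I would use that $g$, being assembled from the homogeneous linear $H_i$, is again a homogeneous linear differential polynomial in $U$, with no constant term. Such a $g$ can be written uniquely as $g=\sum_{j=1}^{n-1}\cE_j(u_j)$ with $\cE_j\in\bbK[\partial]$ (group the monomials by $j$); then $0=\cD(g)=\sum_{j=1}^{n-1}(\cD\cE_j)(u_j)$, and since the derivatives of $u_1,\dots,u_{n-1}$ are algebraically independent over $\bbK$ this forces $\cD\cE_j=0$ in $\bbK[\partial]$ for each $j$. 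As $\bbK[\partial]$ is a domain and $\cD=\dcont(B)\ne0$ (recall $B\ne0$ since $\dprim(B)$ is defined), we get $\cE_j=0$ for every $j$, hence $g=0$. Equivalently, the homogeneous linear differential polynomials in $U$ form a free, hence torsion-free, left $\bbK[\partial]$-module with basis $u_1,\dots,u_{n-1}$.

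Putting the pieces together, $\sum_{i=1}^{n}\cL_i(F_i)=\sum_{i=1}^{n}\cL_i(x_i-a_i)+g=A$, so $A\in(\ps)$ by the second paragraph, and clearly $A\in\bbK\{X\}$; thus $\dprim(B)=A\in(\ps)\cap\bbK\{X\}$. I expect the vanishing of $g$ — really the injectivity of $\cL\mapsto\cL(u_j)$ together with the independence of the $u_j$ — to be the one place where anything must be checked; the remaining steps are just unwinding the definitions of $\dcont$, $\dprim$ and $\ps$.
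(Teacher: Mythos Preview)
Your proof is correct and follows essentially the same route as the paper: write $B=\sum_i\cF_i(F_i)$, factor $\cF_i=\cD\cL_i$ with $\cD=\dcont(B)$, and use that $\bbK[\partial]$ is a domain together with the $\bbK[\partial]$-independence of $u_1,\dots,u_{n-1}$ to cancel $\cD$ from the identity $\cD\bigl(\sum_i\cL_i(H_i)\bigr)=0$. You are in fact a bit more explicit than the paper about the degree check giving membership of $\sum_i\cL_i(F_i)$ in $(\ps)$ and about why $\cD(g)=0$ forces $g=0$; the paper compresses both points.
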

\begin{proof}
For $i=1,\ldots n$ and $j=1,\ldots n-1$,
there exist differential operators $\cL_{ij}\in \bbK [\partial]$
such that
$H_i(U)=\sum_{j=1}^{n-1} \cL_{ij} (u_j)$. If $B(X,U)=\sum_{i=1}^{n} \cF_i (x_i-a_i)$
then $\sum_{i=1}^n \cF_i(H_i(U))=0$. As a consequence, for each $j\in\{1,\ldots ,n-1\}$ then $\sum_{i=1}^n \cF_i(\cL_{ij}(u_j))=0$.
Let $\cL=\dcont (B)$ then $\cF_i= \cL\cL_{i}$ with $\cL_i\in\bbK[\partial]$ and $\dprim (B)=\sum_{i=1}^n \cL_i (x_i-a_i)$.
Thus $\cL \sum_{i=1}^n \cL_i \cL_{ij}=0$ and $\cL\neq 0$ so the differential operator
$\sum_{i=1}^n \cL_i \cL_{ij}=0$. We conclude that $\sum_{i=1}^n \cL_i(H_i(U))=0$.
Therefore $\dprim (B)\in \bbK\{X\}$ which proofs the lemma.
\end{proof}

\section{Conditions for $\dim (\id)=n-1$}\label{sec-dimension ID}

Let $\cP(X,U)$, $F_i$, $H_i$ be as in Section \ref{sec-Basic
notions and notation}.  Let $\ps=\ps(F_1,\ldots,F_n)$ and let $\id$ be the implicit ideal of
$\cP(X,U)$. Let $S$ and $M_{L-1}$ be the leading and principal matrices of $\cP(X,U)$ respectively, as defined in Section \ref{subsec-Linear gamma-Differential Resultants}. Let $\cG$ be the Groebner basis associated to the system $\cP (X,U)$,  $\cG_0=\cG\cap \bbK\{X\}$ and denote by $|\cG_0|$ the number of elements of $\cG_0$.

By Lemma \ref{lem-G0} the ideal $(\ps)\cap\bbK\{X\}=(\cG_0)$ is nonzero.
 Given a nonzero $\id$-primitive differential polynomial $A$ in $(\ps)\cap\bbK\{X\}$ and co-order $\c (A)$ as defined in Section \ref{subsec-diffpolsPS}, in this section we provide necessary and sufficient conditions on $S$, $M(L)$ and $A$  for $A(X)=0$ to be the implicit equation of $\cP(X,U)$.

\subsection{Necessary conditions for $\dim (\id)=n-1$}\label{sec-nec dimension ID}

For $N=\sum_{i=1}^n o_i=0$ then $\cP (X,U)$ in a system of $n$ linear equations in $n-1$ indeterminates.

\begin{lem}\label{lem-N0}
If $N=0$ then $\dim \id=n-1$ if and only if $\rank(S)=n-1$.
\end{lem}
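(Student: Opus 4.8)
When $N=0$, every $F_i$ is of the form $x_i-a_i+H_i(U)$ with $H_i$ a $\bbK$-linear form in $u_1,\ldots,u_{n-1}$ (no derivatives occur), so the $i$-th row of the leading matrix $S$ simply records the coefficients of $u_1,\ldots,u_{n-1}$ in $H_i$. In this degenerate situation the set $\ps$ consists of the $n$ polynomials $F_1,\ldots,F_n$ themselves, and $\id$ is the kernel of the $\bbK$-algebra map $\bbK\{X\}\to\bbK\{U\}$ sending $x_i\mapsto a_i-H_i(U)$; since no derivatives of the $x_i$ or $u_j$ are involved, $\id$ is the prime differential ideal generated by the linear relations among $x_1-a_1,\ldots,x_n-a_n$ modulo the image, i.e.\ $\id=[\ell]$ for a generating set of linear forms $\ell$ obtained by eliminating $U$.

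The plan is to argue by elementary linear algebra on the affine-linear map $\varphi:\bbA^{n-1}\to\bbA^{n}$, $U\mapsto (a_i-H_i(U))_i$. The Zariski closure of its image is an affine-linear subspace whose dimension equals $\rank(S)$ (the rank of the linear part of $\varphi$), and the implicit ideal $\id$ is exactly the vanishing ideal of that subspace. Hence $\dim\id$, which by the discussion in Section~\ref{sec-Basic notions and notation} is $n-|\cC|$ for a characteristic set $\cC$, coincides with $\rank(S)$: more precisely, a reduced echelon form computation on the $n\times n$ matrix $[\,S\mid \mathrm{col}(x_i-a_i)\,]$ (which is precisely $M_{2L}$ when $N=0$, by the description in Section~\ref{sec-Implicit Ideal}) produces exactly $n-\rank(S)$ linear differential polynomials in $\bbK\{X\}$ forming $\cG_0$, and by Algorithm~\ref{alg-characteristic set} and the fact that these are $\bbK$-linearly independent linear forms, $|\cC|=|\cG_0|=n-\rank(S)$. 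Therefore $\dim\id=n-(n-\rank(S))=\rank(S)$, and $\dim\id=n-1$ if and only if $\rank(S)=n-1$.

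Concretely I would carry out the steps as follows. First, observe $\rank(S)\le n-1$ always, since $S$ has $n-1$ columns, and note that by the standing assumption that each parameter $u_j$ occurs in some $F_i$ the matrix $S$ has no zero column. Second, apply the echelon-form description from Section~\ref{sec-Implicit Ideal}: performing Gaussian elimination on $M_{2L}=[\,S\mid I_n\mid \mathrm{col}(a_i)\,]$ (here $L=n$, $M_{L-1}=S$), the rows whose left-hand $S$-part becomes zero are precisely $n-\rank(S)$ in number and their right-hand parts give $\bbK$-linearly independent linear forms in $x_1-a_1,\ldots,x_n-a_n$; these constitute $\cG_0$ and, being linear and independent with distinct leads under $\cR^\star$, already form a characteristic set of $\id$ by Algorithm~\ref{alg-characteristic set}. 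Third, conclude $|\cC|=|\cG_0|=n-\rank(S)$ and hence $\dim\id=\rank(S)$, which yields the claimed equivalence.

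The main obstacle is the bookkeeping to confirm that in the case $N=0$ the general machinery of Sections~\ref{sec-Linear gamma-Differential Resultants} and~\ref{sec-Implicit Ideal} really does collapse to ordinary linear algebra: one must check $\gamma(F_1,\ldots,F_n)=0$ and $o_i=0$ so that $\ps=\{F_1,\ldots,F_n\}$ and $M_{2L}$ is the stated $n\times 2n$ matrix, and verify that no spurious derivatives are introduced by the rankings $\cR,\cR^\star$. Once this reduction is in place the linear-algebra argument that the number of eliminant rows equals $n-\rank(S)$ is routine.
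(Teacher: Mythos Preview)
Your proposal is correct and follows essentially the same route as the paper: both reduce the $N=0$ case to Gaussian elimination on the augmented coefficient matrix (so that $M_{L-1}=S$) and read off the structure of the Gr\"obner basis $\cG$ to determine a characteristic set of $\id$. You prove the slightly stronger intermediate statement $\dim\id=\rank(S)$ by counting $|\cG_0|=n-\rank(S)$ and arguing these independent linear forms already constitute $\cA_0$, whereas the paper directly characterizes $\rank(S)=n-1$ as the condition for $\cG$ to have the shape $\{B_0,u_1-U_1(X),\ldots,u_{n-1}-U_{n-1}(X)\}$; the underlying linear algebra is the same.
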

\begin{proof}
The matrix $M(L)$ is the $n\times n$ matrix whose $n\times (n-1)$ principal submatrix is $S$ and whose last column contains $x_i-a_i$ in the $i$-th row, $i=1,\ldots ,n$.
Then for linear $U_i\in\bbK\{X\}$
\[\rank(S)=n-1\Leftrightarrow\cG=\{B_0,u_1-U_1(X),\ldots ,u_n-U_n(X)\}.\]
Equivalently $\{B_0\}$ is a characteristic set of $\id$.
\end{proof}

We prove in the next theorem that one statement is true in general.

\begin{thm}\label{thm-Sdim}
Let $\cP (X,U)$ be a system of linear DPPEs with implicit ideal $\id$ and leading matrix $S$.
If $\dim\id=n-1$ then $\rank (S)=n-1$.
\end{thm}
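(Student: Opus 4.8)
The plan is to argue by contraposition: assume $\rank(S)\le n-2$ and show that $\dim\id\ge n$, i.e. $\id$ cannot be $(n-1)$-dimensional. The key link to exploit is the relation between the rows of $S$, the leading coefficients vectors $\l(F_i)$, and the structure of $\ps$ revealed through the matrix $M_{2L}$ and its echelon form $E_{2L}$. Concretely, I would reduce to studying the Groebner basis $\cG$ associated to $\cP(X,U)$ and its subset $\cG_0=\cG\cap\bbK\{X\}$, using the fact (Lemma \ref{lem-G0rev}, together with Gao's result $\id=[\cA_0]$ and Algorithm \ref{alg-characteristic set}) that a characteristic set of $\id$ is built from $\cG_0$. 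If I can show that under $\rank(S)\le n-2$ the set $\cG_0$ fails to contain an element whose derivatives reduce every relation — more precisely that the purely-$X$ part of any polynomial in $(\ps)$ has order in each $x_i$ bounded away from the top in a way incompatible with dimension $n-1$ — then $\dim\id\ge n$.

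First I would recall from Remark \ref{rem-kerSt} that for a nonzero linear $B\in(\ps)\cap\bbK\{X\}$, if $\ord(B,u_j)<N-\gamma_j-\gamma-\c(B)$ for all $j$ then $\l(B)\in\ker(S^T)$; and the $i$-th row of $S$ is $\l(F_i)$. The leading coefficients vector $\l(B)=(\alpha_1,\ldots,\alpha_n)$ records, for a candidate $B=\sum\cF_i(x_i-a_i)$, the top coefficients of the operators $\cF_i$. A characteristic polynomial $A$ of a $(n-1)$-dimensional $\id$ must involve all (or enough of) the $x_i$; writing $A=\sum\cL_i(x_i-a_i)$ with $(\cL_1,\ldots,\cL_n)=1$ forces the vector of top coefficients of the $\cL_i$ to be nonzero. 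The crucial computation is to show that when $\rank(S)<n-1$, the echelon form $E_{2L}$ of $M_{2L}$ has its pivots arranged so that $\cG_0$ generates an ideal whose characteristic set has more than one element — equivalently, that $(\ps)\cap\bbK\{X\}$ together with its differential consequences does not cut $\bbK\{X\}$ down to a prime of codimension one. One clean way: exhibit a nontrivial $\bbK$-linear (hence, after clearing, differential-operator) relation among the rows of $S$, lift it via the identities \eqref{eq-dResMx} and \eqref{eq-Mxi} to a statement about $\det(M_{x_{iN-o_i-\gamma}})=(-1)^a\dcres^h(H_1,\ldots,H_n)\det(S_i)$, and observe that $\rank(S)<n-1$ makes all the $\det(S_i)=0$, hence the differential resultant degenerates in a way that (by the $\cG_0$ analysis) leaves the implicit ideal too small to be $(n-1)$-dimensional.

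The main obstacle will be the last step: connecting the rank deficiency of $S$ cleanly to the \emph{dimension} (i.e. the cardinality of a characteristic set) of $\id$, rather than merely to the vanishing of $\dcres$. Vanishing of the resultant is necessary but not sufficient for lower dimension, so I cannot simply quote \eqref{eq-Mxi}; I need the finer information that the Groebner basis $\cG$ produced from $M_{2L}$ has, among its $X$-only rows, none that is first-order-solvable for a single $x_i$ in terms of the others and lower derivatives. I expect to handle this by a careful bookkeeping argument on pivot positions in $E_{2L}$: the columns of $M_{2L}$ corresponding to the top $\cX$-variables $x_{i\,N-o_i-\gamma}$ are pivoted exactly according to $\rank(S)$ (since the relevant submatrix is, up to the identity block from the $a_i$'s, governed by $S$), so $\rank(S)=n-1$ is precisely what is needed for a single $x_i$ to be eliminated down to an order-$(N-o_i-\gamma)$ polynomial relation generating a codimension-one prime. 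I would then invoke the prime-ideal and characteristic-set machinery from \cite{Gao}, \cite{Ritt}, \cite{Kol} to conclude that $\rank(S)<n-1$ forces $|\cC|\ge 2$, i.e. $\dim\id\le n-2<n-1$, completing the contrapositive.
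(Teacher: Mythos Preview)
Your proposal is not a proof but a plan, and the plan has a genuine gap. First a minor slip: in the opening line you write that under $\rank(S)\le n-2$ you will show $\dim\id\ge n$. Since $\id$ is nonzero this is impossible; what you mean (and what you say at the very end) is $\dim\id\le n-2$, i.e.\ $|\cC|\ge 2$. More seriously, the actual mechanism that links $\rank(S)$ to the dimension of $\id$ never appears. You propose to use that $\rank(S)<n-1$ forces all $\det(S_i)=0$ and hence, via \eqref{eq-Mxi}, the top-order coefficients of $\dcres$ vanish; but as you yourself note, vanishing of the resultant tells you nothing about $\dim\id$. The subsequent appeal to ``bookkeeping on pivot positions in $E_{2L}$'' and to the machinery of \cite{Gao}, \cite{Ritt}, \cite{Kol} is not an argument: you would still need to produce, from the rank deficiency of $S$, two elements of $\id$ that cannot lie in a principal differential ideal $[A]$, and nothing in your outline does that.

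The paper's proof supplies precisely the missing construction, and it runs in the direct (not contrapositive) direction. Assume $\dim\id=n-1$, so $\id=[A]$. For \emph{each} nonzero $\alpha\in\ker(S^T)$ one builds an element $A_\alpha\in(\ps)\cap\bbK\{X\}$ with $\c(A_\alpha)=0$ and $\l(A_\alpha)=\alpha$: form $\sum_i\alpha_i\,\partial^{N-o_i-\gamma}F_i$, observe that $\alpha\in\ker(S^T)$ kills the top $U$-derivatives so the $U$-part lands in $\bbK[\cV^h]$, and then use that the coefficient matrix of the smaller set $\overline{\ps}=\{\partial^kF_i:k\le N-o_i-\gamma-1\}$ has full rank $L^h$ (because of the identity block coming from the $x_{ik}$) to subtract a combination $\sum_i\cG_i(F_i)$, $\deg\cG_i\le N-o_i-\gamma-1$, that cancels the remaining $U$-terms. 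Now $A_\alpha\in\id=[A]$ forces $A_\alpha=\cD_\alpha(A)$, and comparing co-orders gives $\l(A_\alpha)=c_\alpha\,\l(A)$ for some nonzero $c_\alpha\in\bbK$. Thus every $\alpha\in\ker(S^T)$ is a scalar multiple of $\l(A)$, so $\dim\ker(S^T)=1$ and $\rank(S)=n-1$. This lifting of kernel vectors to co-order-zero elements of $\id$ is the idea your sketch is missing.
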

\begin{proof}
For $N=0$ the result was proved in Lemma \ref{lem-N0}.
Let us suppose that $N\geq 1$. Let $\overline{\ps}=\{\partial^{N-o_i-\gamma-1} F_i,\ldots ,\partial F_i, F_i \mid i=1,\ldots ,n \}$. Let $\cV^h$ as defined
in  Section \ref{sec-Linear gamma-Differential Resultants} and let
$\overline{\cX}=\{x_i,\ldots ,x_{i N-o_i-\gamma-1}\mid i=1,\ldots n\}$.
Let $M_{2L^h}$ be the $L^h\times (2L^h+1)$ matrix whose $k$-th row
contains the coefficients, as a polynomial in $\bbK[\overline{\cX}][\cV^h]$,
of the $(L^h-k+1)$-th polynomial in $\overline{\ps}$ and where the
coefficients are written in decreasing order w.r.t. $\cR^\star$.
Then $\rank (M_{2L^h})=L^h$ since $M_{2L^h}$ has the $L^h\times L^h$ identity matrix as a submatrix.

Since $\rank (S)\leq n-1$ then $\ker (S^T)\neq \{\overline{0}\}$.
Given $\alpha=(\alpha_1,\ldots ,\alpha_n)\in \ker (S^T)$, $\alpha\neq \overline{0}$ then
\[B(X,U)=\sum_{i=1}^n \alpha_i \partial^{N-o_i-\gamma}F_i(X,U)-\sum_{i=1}^n \alpha_i \partial^{N-o_i-\gamma}(x_i-a_i)\in \bbK[\overline{\cX}][\cV^h].\]
Since $\rank (M_{2L^h})=L^h$ and the first $L^h$ columns of $M_{2L^h}$ correspond to the columns indexed by $\cV^h$ then there exists $\cG_i\in\bbK[\partial]$ with $\deg (\cG_i)\leq N-o_i-\gamma-1$, $i=1,\ldots ,n$ such that
\[B(X,U)+\sum_{i=1}^n \cG_i(F_i(X,U))\in \bbK[\overline{\cX}].\]
Let
\[A_{\alpha}=\sum_{i=1}^n \alpha_i \partial^{N-o_i-\gamma}F_i(X,U)+\sum_{i=1}^n \cG_i(F_i(X,U)).\]
Then $A_{\alpha}\in (PS)\cap \bbK\{X\}$ has leading vector $l(A_{\alpha})=\alpha$ and co-order $\c (A_{\alpha})=0$.

 Since $\dim\id=n-1$ then $A_{\alpha}\in \id=[A]$ for some differential polynomial $A\in\bbK\{X\}$ which is linear by Lemma \ref{lem-G0}. There exists $\cD_{\alpha}\in\bbK[\partial]$ with $\deg(\cD_{\alpha})=\c (A)$ such that $A_{\alpha}= \cD_{\alpha} (A)$. Then there is a nonzero $c_{\alpha}\in\bbK$ such that $l(A_{\alpha})=c_{\alpha} l(A)$.

 Given $\beta\in \ker (S^T)$, $\overline{0}\neq \beta\neq \alpha$ there exist $A_{\beta}\in \id=[A]$ with $l(A_{\beta})=\beta$, $\c (A_{\beta})=0$ and a nonzero $c_{\beta}\in\bbK$ such that $l(A_{\beta})=c_{\beta} l(A)$.
 Hence $\beta=  (c_{\beta}/c_{\alpha}) \alpha$ which proves that the
dimension of $\ker (S^T)$ is equal to $1$. Equivalently, $\rank (S)=n-1$.
\end{proof}

As expected, for $N>0$ the next results show that we need some more requirements for $\dim (\id)=n-1$.

\begin{lem}\label{lem-G0}
\begin{enumerate}
\item The number of elements of $\cG_0$ equals  $|\cG_0|=L-\rank (M_{L-1})$.

\item For every nonzero linear $B\in (\cG_0)$ then $|\cG_0|\geq \c  (B)+1$.
\end{enumerate}
\end{lem}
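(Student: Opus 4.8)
The statement to prove is Lemma~\ref{lem-G0}, which has two parts, and the natural approach is to exploit the explicit description of $\cG$ and $\cG_0$ via Gaussian elimination on the matrix $M_{2L}$.

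For part (1), the plan is to count dimensions. The Groebner basis $\cG$ associated to $\cP(X,U)$ is, by construction, the set of polynomials read off from the rows of the reduced echelon form $E_{2L}$ of $M_{2L}$; since the original $L$ rows of $M_{2L}$ are linearly independent (they contain the $L\times L$ identity as a submatrix in the last $L$ columns, cf. the $M_{2L^h}$ argument in the proof of Theorem~\ref{thm-Sdim}), $E_{2L}$ has exactly $L$ nonzero rows, so $|\cG|=L$. Now $\cG_0=\cG\cap\bbK\{X\}$ consists precisely of those rows of $E_{2L}$ whose support lies entirely in the $\cX$-columns, i.e. whose pivot lies among the first $L-1$ columns of $M_{2L}$ — wait, more carefully: the $\cV$-columns are the first $L-1$ columns of $M_{L-1}$ and the $\cX$-columns are the remaining ones. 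A row of $E_{2L}$ lies in $\bbK\{X\}$ iff its pivot is in an $\cX$-column. The number of rows with pivot in a $\cV$-column equals $\rank(M_{L-1})$ (the submatrix formed by the $\cV$-columns is exactly $M_{L-1}$, of size $L\times(L-1)$), since echelon reduction produces one pivot per linearly independent column among those. Hence $|\cG_0|=L-\rank(M_{L-1})$, which is part (1).

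For part (2), given a nonzero linear $B\in(\cG_0)$, recall that $\c(B)=\min\{N-o_i-\gamma-\ord(B,x_i)\mid i=1,\ldots,n\}$ is the largest integer $c$ with $\partial^c B\in(\ps)$. The plan is: $B,\partial B,\ldots,\partial^{\c(B)}B$ are all elements of $(\ps)\cap\bbK\{X\}=(\cG_0)$ (using Lemma~\ref{lem-G0rev}), and I would argue they are $\bbK$-linearly independent — this is clear because $\partial^k B$ has strictly higher $\cR^\star$-lead than $\partial^{k'}B$ for $k>k'$, as applying $\partial$ strictly raises the lead of a linear differential polynomial. So $(\cG_0)$, viewed as a $\bbK$-vector space spanned by $|\cG_0|$ elements (the rows of $E_{2L}$ lying in $\bbK\{X\}$, which are $\bbK$-linearly independent and span the degree-restricted part), contains $\c(B)+1$ linearly independent elements; therefore $|\cG_0|\geq\c(B)+1$. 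The one subtlety to handle is that $(\cG_0)$ as an ideal is infinite-dimensional over $\bbK$, so I must restrict to the finite-dimensional piece: the polynomials in $(\ps)\cap\bbK\{X\}$ have bounded order (at most $N-o_i-\gamma$ in $x_i$), and $\partial^{\c(B)}B$ still lives in this bounded piece by definition of co-order, so the dimension count takes place inside the $\bbK$-span of $\cG_0$ itself, which has dimension exactly $|\cG_0|$.

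**Expected main obstacle.** The routine part is the echelon/pivot bookkeeping in part (1); the more delicate point is making part (2) rigorous about \emph{which} vector space the independence count happens in — one needs that $\{B,\partial B,\ldots,\partial^{\c(B)}B\}\subseteq\mathrm{span}_{\bbK}(\cG_0)$ and not merely in the ideal $(\cG_0)$. This follows because each $\partial^k B$ for $k\le\c(B)$ lies in $(\ps)\cap\bbK\{X\}$, and every element of this set is a $\bbK$-linear (not just polynomial) combination of the reduced-echelon rows supported on $\cX$, since $\ps$ is linear and $(\cG_0)$'s generators already form a reduced linear system; so the linear span is closed under the operations needed. Once that is pinned down, the inequality is immediate from linear independence of the derivatives.
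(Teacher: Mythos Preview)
Your proposal is correct and follows essentially the same approach as the paper. For part~(1) both you and the paper count pivots in the reduced echelon form $E_{2L}$: rows whose first $L-1$ entries vanish are exactly those in $\cG_0$, giving $|\cG_0|=L-\rank(M_{L-1})$. For part~(2) the paper argues by explicitly replacing the rows of $M_{2L}$ indexed by $\partial^{N-o_k-\gamma-t}F_k$ (for the $k$ realizing the minimum in $\c(B)$) with the coefficient rows of $\partial^{\c(B)-t}B$, $t=0,\ldots,\c(B)$, to obtain a row-equivalent matrix with $\c(B)+1$ rows vanishing on the $\cV$-block; your abstract dimension count in $\mathrm{span}_{\bbK}(\cG_0)$ is the same linear algebra, and your handling of the subtlety (that linear elements of $(\ps)\cap\bbK\{X\}$ lie in $\mathrm{span}_{\bbK}(\cG_0)$ because $\cG$ is reduced) is exactly what makes the row-replacement legitimate.
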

\begin{proof}
\begin{enumerate}
\item Let $M_{2L}$ be the $L\times 2L$ matrix defined in Section \ref{sec-Implicit Ideal} and $E_{2L}$ its redured echelon form.
The number of elements of $\cG_0$ is the number of rows in
$E_{2L}$ with zeros in the first $L-1$ columns. Thus
$|\cG_0|=L-\rank (M_{L-1})$.

\item Given $B\in (\cG_0)=(\ps)\cap\bbK\{X\}$. By definition of  $\c (B)$ then $\partial B$,$\ldots$ , $\partial^{\c (B)} B\in (\ps)\cap\bbK\{X\}$. Also, there exists $k\in\{1,\ldots ,n\}$ such that $\ord(B,x_k)=N-o_k-\gamma-\c (B)$, we can assume that the coefficient of $x_{k N-o_k-\gamma}$ in $B$ is $1$. 
Thus $M_{2L}$ is row equivalent to an $L\times 2L$
matrix with $\partial^{\c (B)} B,\ldots ,\partial B ,B$ in the last $\c (B)+1$
rows. Namely,  replace the row of $M_{2L}$ corresponding to the
coefficients of $\partial^{N-o_k-\gamma-t}F_k$ by $\partial^{\c (B)-t} B$, $t=0,\ldots ,\c (B)$ and next reorder the rows of the obtained matrix. Therefore $|\cG_0|\geq \c (B)+1$.
\end{enumerate}
\end{proof}

If the dimension of $\id$ is $n-1$ then $\id =[A]$ for some $A\in(\ps)\cap\bbK\{X\}=(\cG_0)$ and $\{A\}$ is a characteristic set of  $\id$.
By Lemma \ref{lem-G0} then $A$ is linear and we give some more requirements for $A$ in the next theorem.

\begin{thm}\label{thm-nec-coA}
Let $\cG$ be the Groebner basis associated to the system $\cP (X,U)$ with implicit ideal $\id$, $\cG_0=\cG\cap \bbK\{X\}$. If $\dim\id=n-1$ then $\id=[A]$ where $A$ is a nonzero linear differential polynomial verifying.
\begin{enumerate}
\item $A$ is an $\id$-primitive differential polynomial in $(\cG_0)$.

\item $|\cG_0|= \c (A)+1$.
\end{enumerate}
\end{thm}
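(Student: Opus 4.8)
The plan is to take the characteristic polynomial $A$ already produced in the paragraph preceding the statement, so that $\id=[A]$, $\{A\}$ is a characteristic set of $\id$, and $A$ is a nonzero linear differential polynomial lying in $(\cG_0)$; it then remains to verify (1) and (2) for this $A$.

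For (1), I would show that such an $A$ is automatically $\id$-primitive. Write $A=\sum_{i=1}^n\cF_i(x_i-a_i)$ with $\cF_i\in\bbK[\partial]$, put $\cL=\dcont(A)=\gcld(\cF_1,\dots,\cF_n)$ and $\cF_i=\cL\cL_i$, so that $A=\cL(\dprim(A))$. By the last lemma of Section \ref{subsec-diffpolsPS}, $\dprim(A)\in(\ps)\cap\bbK\{X\}\subseteq\id$, and the one-element set $\{\dprim(A)\}$ is autoreduced. If $\deg\cL\geq1$, then since $\{i:\cF_i\ne 0\}=\{i:\cL_i\ne 0\}$ (as $\cL\ne 0$), the leader of $A$ with respect to the ranking on $X$ is a proper derivative of the leader of $\dprim(A)$, so $\rank(\dprim(A))<\rank(A)$, contradicting the minimality of the rank of the characteristic set $\{A\}$. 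Hence $\deg\cL=0$, $A$ is $\id$-primitive, and $A\in(\cG_0)$; this proves (1). Writing $A=\sum_i\cL_i(x_i-a_i)$ with the $\cL_i$ coprime, the formula $\c(A)=\min_i\{N-o_i-\gamma-\ord(A,x_i)\}$ gives $\deg\cL_i\leq N-o_i-\gamma-\c(A)$ for all $i$, and $A=\sum_i\cL_i(F_i)$ because $\sum_i\cL_i(H_i)=0$.

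For (2), Lemma \ref{lem-G0}(2) applied to $B=A$ already gives $|\cG_0|\geq\c(A)+1$. For the reverse inequality I would first prove that $\l(A)\ne\overline{0}$: if the coefficient of $\partial^{N-o_i-\gamma-\c(A)}$ in every $\cL_i$ vanished, then $\deg\cL_i\leq N-o_i-\gamma-\c(A)-1$ for all $i$ with $\cL_i\ne0$, so every $\partial^{\c(A)+1}\cL_i$ would have degree $\leq N-o_i-\gamma$ and $\partial^{\c(A)+1}A=\sum_i\partial^{\c(A)+1}\cL_i(F_i)$ would lie in $(\ps)\cap\bbK[\cX]$, contradicting the maximality in the definition of $\c(A)$. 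Hence the minimum defining $\c(A)$ is attained at some $i_0$ with $\cL_{i_0}\ne0$, so $\c(A)=\min\{N-o_i-\gamma-\deg\cL_i\mid\cL_i\ne0\}$. Now take any nonzero linear $B\in(\cG_0)\subseteq\id=[A]$. Since $\{A\}$ is a characteristic set of $\id$ and both $A$ and $B$ are linear, Ritt reduction of $B$ modulo $A$ only subtracts $\bbK$-multiples of derivatives of $A$, and the fully reduced remainder, being an element of the prime ideal $\id$ that is reduced with respect to its characteristic set, is zero; hence $B=\cD(A)$ for some nonzero $\cD\in\bbK[\partial]$. As $B\in(\ps)$ we have $\c(B)\geq0$, so $\ord(B,x_i)=\deg\cD+\deg\cL_i\leq N-o_i-\gamma$ for every $i$ with $\cL_i\ne0$, which together with the formula for $\c(A)$ forces $\deg\cD\leq\c(A)$; thus $B$ lies in the $\bbK$-span of $A,\partial A,\dots,\partial^{\c(A)}A$. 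Applying this to the $|\cG_0|$ linearly independent elements of $\cG_0$ gives $|\cG_0|\leq\c(A)+1$, and combined with the first inequality, $|\cG_0|=\c(A)+1$. This proves (2).

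I expect the main obstacle to be the reduction step asserting that every linear polynomial in $\id$ has the form $\cD(A)$ with $\cD\in\bbK[\partial]$: it uses the characteristic-set property (a member of a prime differential ideal that is reduced with respect to a characteristic set is zero) together with the fact that, for linear polynomials, the initial and separant lie in $\bbK$, so Ritt reduction never leaves $\bbK[\partial]\cdot A$. The other delicate point is the book-keeping around the definition of $\c$, especially the proof that $\l(A)\ne\overline{0}$, on which the bound $\deg\cD\leq\c(A)$ depends.
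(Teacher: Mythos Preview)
Your proof is correct and follows essentially the same strategy as the paper's: for (1) both argue that a nontrivial $\id$-content would let $\dprim(A)$ undercut the rank of the characteristic set $\{A\}$, and for (2) both combine Lemma~\ref{lem-G0}(2) with the fact that every linear element of $\id=[A]$ is of the form $\cD(A)$ with $\deg\cD\le\c(A)$. The only organizational difference is that the paper routes the counting through $B_0$ (showing $\cG_0=\{B_0,\cD_1(B_0),\dots,\cD_m(B_0)\}$ with strictly increasing degrees, hence $m\le\c(B_0)$, and then identifying $A$ with $B_0$ up to a constant), whereas you work directly with $A$ and use the linear independence of the elements of the reduced Gr\"obner basis $\cG_0$ inside the $(\c(A)+1)$-dimensional $\bbK$-span of $A,\partial A,\dots,\partial^{\c(A)}A$; this is a minor streamlining rather than a different idea.
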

\begin{proof}
\begin{enumerate}
\item Let $\cL=\dcont(A)$ and $A'=\dprim (A)$. If $A$ is not $\id$-primitive then $\deg (\cL)\geq 1$ and $A=\cL (A')$ contradicting that $\{A\}$ is a characteristic set of $\id$.

\item If $\dim\id =n-1$ and $|\cG_0|=m+1$ then $\cG_0=\{B_0,\cD_1 (B_0),\ldots ,\cD_m (B_0) \}$ with $\cD_i\in\bbK [\partial]$, $\deg (\cD_1)> 0$, $\deg (\cD_{i+1})> \det(\cD_{i})$, $i=1,\ldots ,m$. Then $m\leq \c (B_0)$. Thus by Lemma \ref{lem-G0} then $|\cG_0|= \co (B_0)+1$.
On the other hand $\id=[B_0]$ and $A\in \id$ then $A=\cD (B_0)$ for some $\cD\in \bbK [\partial]$ but we just proved that $A$ is $\id$-primitive then $\cD\in \bbK$. This implies $|\cG_0|= \c (A)+1$.
\end{enumerate}
\end{proof}

Observe that, if $\dim\id=n-1$, given $A$ and $B$ nonzero linear $\id$-primitive differential polynomials in $(\cG_0)$ then $\id=[A]=[B]$ and $\c (A)=\c (B)$.

\begin{cor}\label{cor-nec-coA}
Let $\cG$ be the Groebner basis associated to the system $\cP (X,U)$ with implicit ideal $\id$, $\cG_0=\cG\cap \bbK\{X\}$. If $\dim\id=n-1$, for every
nonzero linear $\id$-primitive differential polynomial $A$ in $(\cG_0)$ then $\id=[A]$ and $|\cG_0|= \c (A)+1$.
\end{cor}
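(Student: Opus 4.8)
The plan is to bootstrap from Theorem~\ref{thm-nec-coA}, which already supplies \emph{one} nonzero linear $\id$-primitive differential polynomial $A_0\in(\cG_0)$ with $\id=[A_0]$ and $|\cG_0|=\c(A_0)+1$. So, given an arbitrary nonzero linear $\id$-primitive $A\in(\cG_0)$, it will be enough to prove that $A$ is a nonzero constant multiple of $A_0$: then $[A]=[A_0]=\id$, the polynomials $A$ and $A_0$ have the same order in every $x_i$ so $\c(A)=\c(A_0)$, and hence $|\cG_0|=\c(A_0)+1=\c(A)+1$.

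First I would note that by Lemma~\ref{lem-G0rev} and $\ps\subseteq[\ps]$ we have $(\cG_0)=(\ps)\cap\bbK\{X\}\subseteq[\ps]\cap\bbK\{X\}=\id=[A_0]$, so $A\in[A_0]$; since $A$ and $A_0$ are linear, the same reasoning used in the proof of Theorem~\ref{thm-nec-coA} yields $A=\cD(A_0)$ for some $\cD\in\bbK[\partial]$. Writing $A_0=\sum_{i=1}^n\cG_i(x_i-a_i)$ we get $A=\sum_{i=1}^n(\cD\cG_i)(x_i-a_i)$, hence $\dcont(A)=\gcld(\cD\cG_1,\ldots,\cD\cG_n)$ while $\dcont(A_0)=\gcld(\cG_1,\ldots,\cG_n)\in\bbK\setminus\{0\}$ because $A_0$ is $\id$-primitive.

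The key step, which I expect to be the main obstacle, is to show that $\gcld(\cD\cG_1,\ldots,\cD\cG_n)$ is a unit times $\cD$. I would argue this through right ideals in the right euclidean domain $\bbK[\partial]$: for any $\cH_1,\ldots,\cH_n\in\bbK[\partial]$ a generator of the right ideal $\cH_1\bbK[\partial]+\cdots+\cH_n\bbK[\partial]$ is a greatest common left divisor of $\cH_1,\ldots,\cH_n$; since $\gcld(\cG_1,\ldots,\cG_n)$ is a unit we have $\sum_i\cG_i\bbK[\partial]=\bbK[\partial]$, so $\sum_i(\cD\cG_i)\bbK[\partial]=\cD\bigl(\sum_i\cG_i\bbK[\partial]\bigr)=\cD\,\bbK[\partial]$, and therefore $\dcont(A)$ is a unit times $\cD$. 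Consequently $\deg\dcont(A)=\deg\cD$; but $A$ is $\id$-primitive, so $\deg\cD=0$, i.e.\ $\cD\in\bbK\setminus\{0\}$ (nonzero because $A\neq0$), which is exactly what was needed. Apart from this noncommutative divisibility bookkeeping every ingredient is already in place, so the rest is immediate.
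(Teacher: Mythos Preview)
Your argument is correct and follows the same route the paper takes: the paper records this corollary only via the observation preceding it and the line in the proof of Theorem~\ref{thm-nec-coA}(2) that ``$A$ is $\id$-primitive then $\cD\in\bbK$'', which is exactly the step you justify. Your right-ideal computation showing $\gcld(\cD\cG_1,\ldots,\cD\cG_n)=\cD$ up to a unit simply fills in the detail the paper leaves implicit.
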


\subsection{Sufficient conditions for $\dim (\id)=n-1$}\label{sec-suf dimension ID}

Given a nonzero linear $\id$-primitive differential polynomial $A$ in $(\ps)\cap\bbK\{X\}$ we give sufficient conditions for $A$ to be a characteristic polynomial of $\id$.

\begin{lem}\label{lem-G0char}
Let $\cG$ be the Groebner basis associated to the system $\cP (X,U)$ with implicit ideal $\id$, $\cG_0=\cG\cap \bbK\{X\}$.  Let $S$ be the principal matrix of $\cP (X,U)$. Given a nonzero linear $\id$-primitive differential polynomial $A$ in $(\cG_0)$ with $|\cG_0|= \c (A)+1$
the following statements hold.
\begin{enumerate}
\item For $j=0,1,\ldots ,\c (A)$ there exist $\cD_j\in\bbK [\partial]$ with $\deg (\cD_j)=j$ such that $\cG_0=\{B_0=\cD_0 (A),B_1=\cD_1(A),\ldots ,B_{\c (A)}=\cD_{\c (A)}(A)\}$.

\item If $\rank (S)=n-1$. Given $B\in \cG\backslash\cG_0$, let us suppose there exists a positive integer $e_B$ such that $1\leq e_B\leq \c (A)$ and $\ord(B,u_j)\leq N-\gamma_j-\gamma-e_B$, $j=1,\ldots ,n-1$. Then there exists $\overline{B}\in (\cG_0)$ such that $\c (B-\overline{B})\geq e_B$.
\end{enumerate}
\end{lem}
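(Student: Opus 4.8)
\emph{Part (1).} Write $d_i:=N-o_i-\gamma$ and fix the representation $A=\sum_{i=1}^n\cL_i(x_i-a_i)=\sum_{i=1}^n\cL_i(F_i)$ with $\cL_i\in\bbK[\partial]$, $\deg\cL_i\le d_i-\c(A)$ (the two forms agree since $\sum_i\cL_i(H_i)=0$). I would first note that $A,\partial A,\ldots,\partial^{\c(A)}A$ are $\c(A)+1$ elements of $(\ps)\cap\bbK\{X\}=(\cG_0)$: each lies in $\bbK\{X\}$, and $\partial^kA\in(\ps)$ for $0\le k\le\c(A)$ straight from the definition of the co-order. They are $\bbK$-linearly independent, since for the orderly ranking on $X$ one has $\lead(\partial^kA)=\partial^k\lead(A)$, strictly increasing in $k$. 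The key step is then to show that the set of linear differential polynomials in the ideal $(\cG_0)\subseteq\bbK[\cX]$ is exactly $\mathrm{span}_\bbK(\cG_0)$: since $\cG_0$ is a reduced Gröbner basis consisting of linear polynomials (Lemma \ref{lem-G0rev}), reducing a linear polynomial modulo $\cG_0$ can only subtract $\bbK$-multiples of the elements of $\cG_0$, so a linear element of $(\cG_0)$ reduces to $0$ and is a $\bbK$-combination of $\cG_0$. This space has dimension $|\cG_0|=\c(A)+1$, so $\{A,\partial A,\ldots,\partial^{\c(A)}A\}$ is a basis of it, and every element of $\cG_0$ equals $\cD(A)$ for some $\cD\in\bbK[\partial]$ with $\deg\cD\le\c(A)$. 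Finally, ordering $\cG_0=\{B_0<\cdots<B_{\c(A)}\}$ for $\cR^\star$ and writing $B_j=\cD_j(A)$, the leads $\lead(B_j)=\partial^{\deg\cD_j}\lead(A)$ are strictly increasing, which forces $0\le\deg\cD_0<\cdots<\deg\cD_{\c(A)}\le\c(A)$, i.e. $\deg\cD_j=j$.

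\emph{Part (2).} Since $\rank(S)=n-1$ we have $\dim\ker(S^T)=1$, and $\l(A)\in\ker(S^T)$ by Remark \ref{rem-kerSt} (as $A\in\bbK\{X\}$), so $\ker(S^T)=\bbK\,\l(A)$. If $\c(B)\ge e_B$ we take $\overline B=0$. Otherwise the plan is to raise the co-order of $B$ one unit at a time. Put $C_0:=B$ and assume $C_t\in(\ps)\setminus\bbK\{X\}$ with $\ord(C_t,u_j)\le N-\gamma_j-\gamma-e_B$ for all $j$ and $c_t:=\c(C_t)<e_B$. Then $\ord(C_t,u_j)<N-\gamma_j-\gamma-c_t$ for every $j$, so Remark \ref{rem-kerSt} gives $\l(C_t)=\lambda_t\,\l(A)$ for some $\lambda_t\in\bbK$. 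Since $c_t<e_B\le\c(A)$, the polynomial $\partial^{\c(A)-c_t}(A)$ lies in $(\ps)\cap\bbK\{X\}=(\cG_0)$, has co-order $c_t$, and has leading coefficients vector $\l(A)$ (differentiation lowers the co-order by one and preserves the leading coefficients vector). Hence $C_t$ and $\lambda_t\,\partial^{\c(A)-c_t}(A)$ have the same co-order $c_t$ and the same leading coefficients vector, so these top coefficients cancel in $C_{t+1}:=C_t-\lambda_t\,\partial^{\c(A)-c_t}(A)$, giving $\c(C_{t+1})\ge c_t+1$; moreover $C_{t+1}\in(\ps)\setminus\bbK\{X\}$ and $\ord(C_{t+1},u_j)=\ord(C_t,u_j)$ for all $j$. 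As the co-orders strictly increase, after $m\le e_B-\c(B)$ steps $\c(C_m)\ge e_B$, and then $\overline B:=\sum_{t<m}\lambda_t\,\partial^{\c(A)-c_t}(A)\in(\cG_0)$ satisfies $\c(B-\overline B)=\c(C_m)\ge e_B$.

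The routine parts are the ``reduction only subtracts scalars'' claim (standard for a reduced linear Gröbner basis) and the behaviour of leading coefficients of differential operators under differentiation. The real obstacle is finding the argument of Part (2): one must recognise that the hypothesis $\ord(B,u_j)\le N-\gamma_j-\gamma-e_B$ is precisely what keeps every successive leading coefficients vector inside the single line $\ker(S^T)=\bbK\,\l(A)$, and that the correct correction at co-order level $c_t$ is the scalar multiple of the derivative $\partial^{\c(A)-c_t}(A)$ --- the one element of $(\cG_0)$ of that co-order whose leading coefficients vector spans $\ker(S^T)$, and the only one that removes the offending top part of $C_t$ while leaving its $U$-part intact.
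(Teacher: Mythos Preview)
Your proof is correct and follows essentially the same route as the paper's. In Part~(1) you make explicit the linear-algebra fact that the linear elements of $(\cG_0)$ form a $\bbK$-space of dimension $|\cG_0|$ spanned by $A,\partial A,\ldots,\partial^{\c(A)}A$, where the paper obtains the same conclusion via echelon forms of $M_{2L}$; your degree count from the strictly increasing leads is the same pigeonhole. In Part~(2) both proofs raise the co-order one step at a time by subtracting an element of $(\cG_0)$ whose leading coefficients vector spans $\ker(S^T)$: the paper uses $B_{\c(A)-s}$ from Part~(1), you use $\partial^{\c(A)-c_t}A$ directly, which is harmless since these differ by a nonzero scalar in their leading coefficients vectors. (One minor slip: the ranking on $X$ in this paper is the elimination ranking induced by $x_n<\cdots<x_1$, not the orderly one; but the property $\lead(\partial P)=\partial\,\lead(P)$ that you use still holds for it.)
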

\begin{proof}
\begin{enumerate}
\item Since $|\cG_0|=L-\rank(M_{L-1})= \c (A)+1$, there exists an echelon form $E$ of $M_{2L}$ whose last $\c (A)+1$ rows contain the coefficients of $\partial^{\c (A)} A, \ldots ,\partial A, A$.  Then the last $\c (A)+1$ rows of the reduced echelon form $E_{2L}$ of $M_{2L}$ contain the coefficients of $B_{\c (A)}=\cD_{\c (A)} (A), \ldots, B_1=\cD_1( A), B_0=\cD_0 (A)$
for some $\cD_j\in\bbK [\partial]$ with $\deg (\cD_j)=j$ where $j=0,1,\ldots ,\c (A)$.
Then $\cG_0=\{B_0,B_1,\ldots ,B_{\c (A)}\}$.

\item  Let $s\in\{0,\ldots ,e_B-1\}$. Then by 1 the co-order of $B_{\c (A)-s}$ equals $\c (B_{\c (A)-s})=s$.
Since $B_{\c (A)-s}\in \bbK\{X\}$ then $\ord(B_{\c (A)-s},u_j)<N-\gamma_j-\gamma-s$, hence by Remark \ref{rem-kerSt} then $\l (B_{\c (A)-s})\in \ker (S^T)$.
Given $B\in \cG\backslash\cG_0$ we will prove by induction on $s$ that for $s=0,\ldots ,e_B-1$ there exists $C_s\in (\cG_0)$ such that $\c (B-C_s)\geq s+1$. Then $\overline{B}=C_{e_B-1}$.

There exists $\cG_i\in\bbK[\partial]$ with $\deg(\cG_i)\leq N-o_i-\gamma$, $i=1,\ldots ,n$ such that $B=\sum_{i=1}^n\cG_i (F_i(X,U))$. Let $\beta_i$ be the coefficient of $\partial^{N-o_i-\gamma}$ in $\cG_i$. By assumption $\ord(B,u_j)\leq N-\gamma_j-\gamma-e_B<N-\gamma_j-\gamma$ so $\beta=(\beta_1,\ldots ,\beta_n)\in\ker (S^T)$. Now $\rank (S)=n-1$ which means that $\dim \ker (S^T)=1$ so there exists $\mu\in\bbK$ such that $\beta=\mu l(B_{\c (A)})$. Let $C_0=\mu B_{\c (A)}$ then $\c (B-C_0)\geq 1$. This proves the claim for $s=0$.

Assuming the claim is true for $s-1$, $s\geq 1$ then there exists $C_{s-1}\in (\cG_0)$ such that $\c (B-C_{s-1})\geq s$. Then $B-C_{s-1}=\sum_{i=1}^n\cG_i^{s} (F_i(X,U))$ where $\cG_i^{s}\in\bbK[\partial]$ with $\deg(\cG_i^{s})\leq N-o_i-\gamma-s$, $i=1,\ldots ,n$. Let $\beta_i^s$ be the coefficient of $\partial^{N-o_i-\gamma-s}$ in $\cG_i^s$. By assumption $\ord(B-C_{s-1},u_j)\leq N-\gamma_j-\gamma-e_B<N-\gamma_j-\gamma-s$ so $\beta^s=(\beta_1^s,\ldots ,\beta_n^s)\in\ker (S^T)$. Now there exists $\mu_s\in\bbK$ such that $\beta^s=\mu_s l(B_{\c (A)-s})$. Let $C_s=C_{s-1}+\mu_s B_{\c (A)-s}$ then $\c (B-C_s)\geq s+1$.
\end{enumerate}
\end{proof}

\begin{thm}\label{thm-G0char}
Let $\cG$ be the Groebner basis associated to the system $\cP (X,U)$ with implicit ideal $\id$, $\cG_0=\cG\cap \bbK\{X\}$.  Let $S$ be the principal matrix of $\cP (X,U)$. If $\rank (S)=n-1$, for every nonzero linear $\id$-primitive differential polynomial $A$ in $(\cG_0)$ with $|\cG_0|= \c (A)+1$ then $A$ is a characteristic polynomial of $\id$.
\end{thm}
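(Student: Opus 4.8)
The plan is to prove that $\id=[A]$; the statement then follows almost formally. Since $A\in(\cG_0)\subseteq\id$ and $A\neq 0$ we already have $[A]\subseteq\id$, so the whole problem is the reverse inclusion $\id\subseteq[A]$. Once $\id=[A]$ is known, $\id$ is a differential prime of differential dimension $n-1$ defined by the single linear polynomial $A$, whose initial is a nonzero element of $\bbK$; being linear (hence irreducible) and $\id$-primitive, $A$ has the property that $\{A\}$ is a characteristic set of $\id$, i.e. $A$ is a characteristic polynomial of $\id$. To obtain $\id\subseteq[A]$ I would start from Lemma~\ref{lem-G0char}(1): the hypotheses give $\cG_0=\{B_0,\dots,B_{\c(A)}\}$ with $B_j=\cD_j(A)$, $\deg(\cD_j)=j$; in particular $\cD_0\in\bbK\setminus\{0\}$, so $B_0$ is a nonzero scalar multiple of $A$ and $[\cG_0]=[B_0]=[A]$. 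Next, by \cite{Gao} (see Section~\ref{sec-Implicit Ideal}), $\id=[\cA_0]$ where $\cA$ is the characteristic set of $[\ps]$ produced by Algorithm~\ref{alg-characteristic set} from $\cG$ and $\cA_0=\cA\cap\bbK\{X\}$; so it is enough to show $\cA_0=\{B_0\}$ up to a nonzero scalar.

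Because the $B_i$ are linear and $\cG$ is the reduced Groebner basis for the lex order from $\cR^\star$, distinct elements of $\cG$ have distinct leaders, so Algorithm~\ref{alg-characteristic set} pseudo-reduces at every index; and since $\bbK\{X\}$ is below $\{U\}$ for $\cR^\star$, the elements of $\cG_0$ are exactly the first $\c(A)+1$ members $B_0,\dots,B_{\c(A)}$ of $\cG$. For $1\le j\le\c(A)$ the polynomial $B_j=\cD_j(A)$ is a $\bbK$-linear combination of $A,\partial A,\dots,\partial^j A$, hence of $B_0,\partial B_0,\dots,\partial^j B_0$, so $\prem(B_j,\cA^{j-1})=\prem(B_j,B_0)=0$ and these steps contribute nothing. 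Therefore the only candidates for further elements of $\cA$ are the pseudo-remainders of the $B\in\cG\setminus\cG_0$; each such $B$ has leader a derivative of some $u_\ell$, so it suffices to prove that $\prem(B,\cA)$ again has leader in $\{U\}$ (or is zero). Granting this, $\cA_0=\{B_0\}$ and $\id=[\cA_0]=[B_0]=[A]$, which is what we want.

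The step I expect to be the main obstacle is exactly this last claim: the Ritt pseudo-remainder of a $U$-involving Groebner basis element against the growing chain could a priori collapse into $\bbK\{X\}$, which would add a second element to $\cA_0$ and force $\dim\id<n-1$. This is where both hypotheses are indispensable. Imitating the proof of Lemma~\ref{lem-G0char}(2): if during the reduction of some $B\in\cG\setminus\cG_0$ one meets a nonzero $C\in(\ps)$, with leader still in $\{U\}$, whose reduction modulo $\cA$ would land in $\bbK\{X\}$, then the appropriate leading coefficient vector of $C$ lies in $\ker(S^T)$ by Remark~\ref{rem-kerSt}; since $\rank(S)=n-1$ gives $\dim\ker(S^T)=1$, that vector is a scalar multiple of $\l(B_{\c(A)-s})$ for the relevant co-order $s$, and subtracting the element of $(\cG_0)$ supplied by Lemma~\ref{lem-G0char}(2) strictly raises the co-order. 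Iterating this at most $\c(A)$ times — which is possible precisely because $|\cG_0|=\c(A)+1$ puts one element $B_{\c(A)-s}\in(\cG_0)\subseteq[A]$ at every co-order $s=0,\dots,\c(A)$ — absorbs $C$ into $(\cG_0)$ up to a polynomial whose leader is still in $\{U\}$, so no element with leader in $\{X\}$ is ever produced. The delicate points will be to phrase the ``collapse into $\bbK\{X\}$'' alternative cleanly inside the pseudo-reduction and to verify that the co-order really increases at each iteration; and Theorem~\ref{thm-Sdim} shows that $\rank(S)=n-1$ by itself is not enough, so the co-order bookkeeping governed by $|\cG_0|=\c(A)+1$ cannot be dropped.
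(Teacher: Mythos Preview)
Your proposal is correct and follows essentially the same route as the paper: both run Algorithm~\ref{alg-characteristic set} on $\cG$, use Lemma~\ref{lem-G0char}(1) to see that the $\cG_0$-steps contribute only $B_0$ to $\cA$, and then invoke Lemma~\ref{lem-G0char}(2) together with $\dim\ker(S^T)=1$ to control the pseudo-remainders of the $B_i\in\cG\setminus\cG_0$ so that $\cA_0=\{B_0\}$. The paper makes the last step explicit by an induction on $i$ that splits according to whether $\cG_{j_i}\cap\cA^{i-1}$ is empty (in which case $\prem(B_i,\cA^{i-1})$ keeps its $u_{j_i}$-leader) or not (in which case it is shown to be zero), which is exactly the ``delicate point'' you flagged.
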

\begin{proof}
If $\c (A)=0$ then $\rank (M_{L-1})=L-1$. By Theorem 18(2) and Lemma 20(4) in \cite{RS} and Lemma \ref{lem-G0char}(1) then $A$ is a characteristic polynomial of $\id$.

Let us suppose that $\c (A)>0$. For $j\in\{1,\ldots ,n-1\}$ let
\[\cG_j=\{B\in\cG\mid \lead (B)=u_{jk}, k\in\{0,1,\ldots ,N-\gamma_j-\gamma\}\}.\]
Then $\cG\backslash\cG_0=\cup_{j=1}^{n-1}\cG_j$.
We use Algorithm \ref{alg-characteristic set} to prove the result. Set $\cA^0=\{B_0\}$ then by Lemma \ref{lem-G0char}(1) we have
$\prem (B_i,\cA^{i-1})=0$, $i=1,\ldots ,\c (A)$, thus $\cA^i=\cA^{0}$. Given $i\in\{\c (A)+1,\ldots ,L-1\}$ then $B_i\in\cG_{j_i}$ for some $j_i\in\{1,\ldots ,n-1\}$ and if $\cG_{j_i}\cap \cA^{i-1} =\emptyset$ then $\mbox{\prem}(B_i,\cA^{i-1})\notin\bbK\{X\}$ and $\cA^i=\cA^{i-1} \cup\{\mbox{\prem}(B_i,\cA^{i-1})\}$. If $i=\c (A)+1$ then $\cG_{j_i}\cap \cA^{i-1}=\emptyset$, we will prove by induction on $i$ that for $i=\c (A)+2,\ldots ,L-1$ if $\cG_{j_i}\cap \cA^{i-1} \neq\emptyset$ then $\prem (B_i,\cA^{i-1})=0$ and so $\cA^i=\cA^{i-1}$. This proves that $\cA_0=\cA^0$ and $A$ is a characteristic polynomial of $\id$.

For $i=\c (A)+2$ let us suppose that $\cG_{j_i}\cap \cA^{i-1}\neq\emptyset$ then $|\cG_0|=L-\rank (M_{L-1})= \c (A)+1$ implies
\[1\leq e_i=\ord (B_i,u_{j_i})-\ord (B_{i-1},u_{j_i})\leq \c (A).\]
Hence $\ord(B_{i-1},u_{j_i})\leq N-\gamma_{j_i}-\gamma-e_i$ and by Lemma \ref{lem-G0char}(2) there exists $\overline{B_i}\in (\cG_0)$ such that $\c (B_{i-1}-\overline{B_i})\geq e_i$. Thus $\partial^t (B_{i-1}-\overline{B_i})\in (\ps)$ for $t=1,\ldots ,e_i$. Therefore $C_i=\prem (B_i,B_{i-1}-\overline {B_i})\in (\ps)$ with $\lead (C_i)<\lead (B_{i-1})$ so $C_i\in (\ps)\cap\bbK\{X\}$. Since $\prem (B_{i-1}-\overline {B_i},\cA^{i-1})=0$ then $\prem (B_i,\cA^{i-1})=\prem (C_i,\cA^{i-1})= \prem (C_i,\cA^0)=0$.

Given $i\in \{\c (A)+3,\ldots ,L-1\}$ with $\cG_{j_i}\cap \cA^{i-1} \neq\emptyset$ then there exists $B\in \{B_{\c (A)+1},\ldots ,B_{i-1}\}$ such that
\[1\leq e_B=\ord (B_i,u_{j_i})-\ord (B,u_{j_i})\leq \c (A).\]
Hence $\ord(B,u_{j_i})\leq N-\gamma_{j_i}-\gamma-e_B$ and by Lemma \ref{lem-G0char}(2) there exists $\overline{B}\in (\cG_0)$ such that $\c (B-\overline{B})\geq e_B$.
Thus $\partial^{e_B} (B-\overline{B})\in (\ps)$. Therefore $C_i=B_i-\partial^{e_B} (B_{i-1}-\overline {B})\in (\ps)$ with $\lead (C_i)<\lead (B_{i})$ then $C_i=\gamma_0 B_0+\cdots +\gamma_{i-1} B_{i-1}$ with $\gamma_0,\ldots ,\gamma_{i-1}\in\bbK$. By induction on $i$ then
\[\prem (B_{i-1}-\overline {B},\cA^{i-1})=\prem(\prem(B_{i-1}-\overline{B},\prem(B_{i-1},\cA^{i-2})),\cA^{i-2})=0\]
so we have $\prem (B_i,\cA^{i-1})=\prem (C_i,\cA^{i-1})$. By induction we also have $\prem (B_0,\cA^{i-1})=0, \ldots , \prem (B_{i-1},\cA^{i-1})=0$ then
$\prem (C_i,\cA^{i-1})=0$.
\end{proof}

\begin{cor}\label{cor-dim}
Given a system $\cP (X,U)$ of linear DPPEs with implicit ideal $\id$.  Let $S$ and $M_{L-1}$ be the leading and principal matrices of $\cP(X,U)$ respectively. The following statements are equivalent.
\begin{enumerate}
\item The dimension of $\id$ is $n-1$.

\item $\rank (S)=n-1$ and there exists a nonzero linear $\id$-primitive differential polynomial $A$ such that $L-\rank(M_{L-1})=\c (A)+1$.

\item $\rank (S)=n-1$ and for every nonzero linear $\id$-primitive differential polynomial $B$ then $L-\rank(M_{L-1})=\c (B)+1$.
\end{enumerate}
In such situation $A(X)=0$ is the implicit equation of $\cP (X,U)$.
\end{cor}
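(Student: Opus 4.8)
The plan is to establish Corollary~\ref{cor-dim} by assembling the implications from the theorems already proved in this section, since the three statements are designed to be a clean repackaging of Theorems~\ref{thm-Sdim}, \ref{thm-nec-coA} and \ref{thm-G0char} together with Lemma~\ref{lem-G0}. First I would prove $(1)\Rightarrow(3)$. Assume $\dim\id=n-1$. Theorem~\ref{thm-Sdim} gives $\rank(S)=n-1$ directly. For the second half, take any nonzero linear $\id$-primitive differential polynomial $B$ in $(\ps)\cap\bbK\{X\}=(\cG_0)$ (note such $B$ exists, e.g. $\dprim(B_0)$ for the $B_0$ of Lemma~\ref{lem-G0rev}); by Corollary~\ref{cor-nec-coA} then $\id=[B]$ and $|\cG_0|=\c(B)+1$, and by Lemma~\ref{lem-G0}(1) we have $|\cG_0|=L-\rank(M_{L-1})$, so $L-\rank(M_{L-1})=\c(B)+1$. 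This handles $(1)\Rightarrow(3)$.

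Next, $(3)\Rightarrow(2)$ is immediate once we check that at least one nonzero linear $\id$-primitive $A$ exists in $(\cG_0)$: by Lemma~\ref{lem-G0rev} the ideal $(\cG_0)$ is nonzero, pick any nonzero linear $B\in(\cG_0)$, and by the lemma preceding Section~\ref{sec-dimension ID} (the one showing $\dprim(B)\in(\ps)\cap\bbK\{X\}$) its $\id$-primitive part $A=\dprim(B)$ is again in $(\cG_0)$ and is $\id$-primitive by construction; so statement $(3)$ applied to this $A$ yields $(2)$. Then $(2)\Rightarrow(1)$ is exactly Theorem~\ref{thm-G0char}: from $\rank(S)=n-1$ together with $|\cG_0|=L-\rank(M_{L-1})=\c(A)+1$ for the given $\id$-primitive $A$, Theorem~\ref{thm-G0char} concludes that $A$ is a characteristic polynomial of $\id$, hence $\{A\}$ is a characteristic set and $\dim\id=n-\mid\cC\mid=n-1$. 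Finally, for the closing sentence: once $(1)$ holds, the $A$ produced in $(2)$ (or any $\id$-primitive generator) is a characteristic polynomial of $\id$, so by the definition in Section~\ref{sec-Basic notions and notation} the equation $A(X)=0$ is the implicit equation of $\cP(X,U)$.

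The only genuinely delicate point is the existence, in both $(3)\Rightarrow(2)$ and the "in such situation" clause, of a nonzero linear $\id$-primitive differential polynomial to which the quantified statements can be applied; this is where I must invoke Lemma~\ref{lem-G0rev} (to get nonzeroness of $(\cG_0)$) and the $\dprim$-stability lemma (to stay inside $(\cG_0)$ after taking the $\id$-primitive part). Everything else is a bookkeeping chain through results already in hand, so I do not expect a real obstacle; the main care is simply to make sure that each theorem is applied to an object of the right type (linear, $\id$-primitive, in $(\cG_0)$, with the co-order condition). I would present the proof as the three implications $(1)\Rightarrow(3)\Rightarrow(2)\Rightarrow(1)$ in that cyclic order, followed by one line deducing the implicit-equation statement from the characteristic-polynomial conclusion of Theorem~\ref{thm-G0char}.
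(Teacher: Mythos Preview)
Your proposal is correct and follows essentially the same route as the paper: both arguments simply assemble Theorems~\ref{thm-Sdim}, \ref{thm-nec-coA} (with Corollary~\ref{cor-nec-coA}) and Theorem~\ref{thm-G0char}, using Lemma~\ref{lem-G0}(1) to translate $|\cG_0|$ into $L-\rank(M_{L-1})$. Your cyclic organization $(1)\Rightarrow(3)\Rightarrow(2)\Rightarrow(1)$ is in fact slightly cleaner than the paper's, which records $(1)\Rightarrow(2)$, $(1)\Rightarrow(3)$ and $(3)\Rightarrow(1)$ and leaves the implication $(2)\Rightarrow(1)$ implicit; your explicit use of Lemma~\ref{lem-G0rev} and the $\dprim$-stability lemma to guarantee an $\id$-primitive witness in the step $(3)\Rightarrow(2)$ is also a welcome precision.
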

\begin{proof}
By Theorem \ref{thm-Sdim} and Theorem \ref{thm-nec-coA} then (1)$\Rightarrow$(2) and by Corollary \ref{cor-nec-coA} (1)$\Rightarrow$(3).
Theorem \ref{thm-G0char} proves (3)$\Rightarrow$(1).
\end{proof}

\section{Linear perturbations of $\cP (X,U)$}\label{sec-Linear perturbation}

Let $\cP(X,U)$, $F_i$, $H_i$ be as in Section \ref{sec-Basic
notions and notation}.
Let $p$ be a differential indeterminate over $\bbK$ such that $\partial (p)=0$. Denote by
$\bbK_p=\bbK \langle p\rangle$ the differential field extension of
$\bbK$ by $p$. A {\sf linear perturbation of the system } $\cP (X,U)$ is a new system
\begin{equation*}\label{pDPPEs}
\cP_{\phi}(X,U) = \left\{\begin{array}{ccc}x_1 &= & P_1 (U)+p\,\phi_1(U)\\ & \vdots  & \\
x_n&= & P_n (U)+p\,\phi_n (U).\end{array}\right.
\end{equation*}
where the {\sf linear perturbation} $\phi=(\phi_1(U),\ldots ,\phi_n(U))$ is a family of linear differential polynomials in $\bbK\{U\}$.
For $i=1,\ldots ,n$ let
\[F^{\phi}_i(X,U)=F_i(X,U)-p\,\phi_i(U)\mbox{ and }H^{\phi}_i(U)=H_i(U)-p\,\phi_i(U).\]
Let $\ps_{\phi}=\ps (F^{\phi}_1,\ldots ,F^{\phi}_n)$, a set of linear differential polynomials in $\bbK_p [\cX][\cV]\subset\bbK_p \{X\cup U\}$ and let $(\ps_{\phi})$ be the ideal generated by $\ps_{\phi}$ in $\bbK_p [\cX][\cV]$.
We prove next the existence of a linear perturbation $\phi$ such that $\dcres (F_1^{\phi},\ldots ,F_n^{\phi})\neq 0$.

\para

Let us suppose that $o_n\geq o_{n-1}\geq \ldots \geq o_1$ to define the perturbation $\phi=(\phi_1(U),\ldots ,\phi_n(U))$ by
\begin{equation}\label{eq-phi}
\phi_i (U)=\left\{\begin{array}{ll}\varepsilon_i u_{n-i-1, o_i-\gamma_{n-i-1}}+ u_{n-i},& i=1,\ldots ,n-2,\\
u_1,& i=n-1,\\
\varepsilon_n u_{n-1, o_n-\gamma_{n-1}},& i=n,
\end{array}\right.
\end{equation}
where $\varepsilon_i=1$ if $o_i\neq 0$ and  $\varepsilon_i=0$ if $o_i= 0$, for $i=1,\ldots ,n$.
\para

Let us suppose that $N\geq 1$. We denote by $M_{\phi}(L^h)$ the complete differential homogeneous resultant matrix for the set of
linear differential polynomials $H^{\phi}_1,\ldots ,H^{\phi}_n$. Then  $M_{\phi} (L^h)$ is an $L^h\times L^h$ matrix with elements in $\bbK[p]$ and there exists an $L^h\times L^h$ matrix $M_{\phi}$ with elements in $\bbK$ such that $M_{\phi} (L^h)=M(L^h)-p\,M_{\phi}$. Then
\[\dcres^h (H^{\phi}_1,\ldots ,H^{\phi}_n)=\det (M_{\phi}(L^h))=\det(M(L^h)-p\,M_{\phi}).\]
As expected, the linear perturbation that makes  $\dcres (F_1^{\phi},\ldots ,F_n^{\phi})\neq 0$ is not unique. Our proposal can be viewed as a generalization of
 the characteristic polynomial of a matrix to the linear differential case in the spirit of \cite{Can}. Namely, for
$o_n=1$, $o_{n-1}=\cdots =o_1=0$ then we obtain the characteristic polynomial of $M(L^h)$, namely $\dcres^h (H_1-p\,\phi_1,\ldots ,H_n-p\, \phi_n)=\det(M(L^h)-p\, I_{L^h})$ where $I_{L^h}$ is the $L^h\times L^h$ identity matrix.

Let $S^{\phi}$ be the leading matrix of $\cP_{\phi}(X,U)$. For $i\in\{1,\ldots n\}$ let $S_i^{\phi}$ be
the $(n-1)\times (n-1)$ matrix obtained by removing the $i$-th row of $S^{\phi}$.

\begin{prop}\label{prop-dResh-nonzero}
Given a system $\cP (X, U)$ of linear DPPEs and the perturbation $\phi$ defined by \eqref{eq-phi}.
The following statements hold.
\begin{enumerate}
\item The determinant of $S^{\phi}_n$ is nonzero and it has degree $n-1$ in $p$.

\item If $N\geq 1$ then the linear complete homogeneous differential resultant
$\dcres^h (H^{\phi}_1,\ldots ,H^{\phi}_n)$ is a polynomial in $\bbK [p]$ of degree $L^h$ and not identically zero.
\end{enumerate}
\end{prop}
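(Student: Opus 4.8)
The plan is to prove part~(1) by a direct determinant computation and then deduce part~(2) from it via the structural identity \eqref{eq-Mxi} that relates $\dcres^h$ to a maximal minor of the resultant matrix. First I would unwind the definition of the leading matrix $S^{\phi}$ of $\cP_{\phi}(X,U)$: its $(i,j)$ entry is the coefficient of $u_{n-j,\,o_i-\gamma_j}$ in $F_i^{\phi}=F_i-p\,\phi_i$, i.e. the corresponding entry of $S$ minus $p$ times the coefficient of that same derivative in $\phi_i$. Reading off \eqref{eq-phi}, for $i=1,\dots,n-2$ the perturbation $\phi_i$ contributes $-p$ to the entry associated with $u_{n-i-1,\,o_i-\gamma_{n-i-1}}$ (the $\varepsilon_i u_{n-i-1,o_i-\gamma_{n-i-1}}$ term, active exactly when $o_i\neq 0$) and to the entry associated with $u_{n-i}$; for $i=n-1$ it contributes $-p$ to the $u_1$-entry; for $i=n$ it contributes $-p$ to the entry associated with $u_{n-1,\,o_n-\gamma_{n-1}}$. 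Crucially, the column index attached to $u_{n-j,\,o_i-\gamma_j}$ in $S$ is $j$, and the ordering $o_n\geq\cdots\geq o_1$ was chosen precisely so that the shift $o_i-\gamma_{n-i-1}$ lands the perturbation entry in the correct slot; the net effect on $S^{\phi}_n$ (the $n\times n$ matrix $S^{\phi}$ with its $n$-th row deleted) is that, up to sign and row/column permutation, the $p$-part of $S^{\phi}_n$ is lower (or upper) triangular with $-p$ along the diagonal, while the $S$-part is supported strictly off that diagonal in the ``lower'' region. Hence $\det(S^{\phi}_n)$, viewed as a polynomial in $p$, has leading term $(-p)^{n-1}$ up to a unit, so it has degree exactly $n-1$ and in particular is nonzero. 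This is the step I expect to require the most care: one has to check that the various index shifts $o_i-\gamma_{n-i-1}$ and $o_n-\gamma_{n-1}$ are all admissible (non-negative, and $\leq N-o_i-\gamma$ so that the corresponding variables actually occur in $\cV$) and that the induced bijection between the perturbed entries and the diagonal of a suitably permuted $S^{\phi}_n$ is genuine; the cases $o_i=0$ must be handled separately since then $\varepsilon_i=0$ and $\phi_i$ only supplies the lower-order term.

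For part~(2), assume $N\geq 1$. By \eqref{eq-Mxi} applied to the perturbed system, there is $a\in\bbN$ with
\[
\det\!\big(M_{\phi}\big(x_{n,\,N-o_n-\gamma}\big)\big)=(-1)^a\,\dcres^h(H_1^{\phi},\dots,H_n^{\phi})\,\det(S^{\phi}_n),
\]
where $M_{\phi}(x_{n,N-o_n-\gamma})$ is the submatrix of the principal matrix of $\cP_{\phi}(X,U)$ obtained by deleting the row of $\partial^{\,N-o_n-\gamma}F_n^{\phi}$. Since $\det(S^{\phi}_n)\neq 0$ by part~(1), it suffices to show $\dcres^h(H_1^{\phi},\dots,H_n^{\phi})$ is not identically zero, and then to pin down its degree in $p$. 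Non-vanishing I would get by a specialization argument: evaluate the polynomial $\det(M(L^h)-p\,M_{\phi})\in\bbK[p]$ at a convenient value (for instance, a value of $p$ for which the perturbed polynomials $H_i^{\phi}$ become ``generic enough'' that their resultant set $\ps^h$ has full-rank coefficient matrix), or alternatively exhibit a single nonzero monomial in $p$ of maximal degree. Concretely, $\det(M(L^h)-p\,M_{\phi})$ has degree in $p$ at most $L^h$, with the coefficient of $p^{L^h}$ equal to $(-1)^{L^h}\det(M_{\phi})$; so the claim ``degree $=L^h$ and not identically zero'' reduces to showing $\det(M_{\phi})\neq 0$. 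I would identify $M_{\phi}$ explicitly: it is the homogeneous resultant matrix of the pure perturbation pieces $\phi_1,\dots,\phi_n$ (the coefficient of $p$ in $M(L^h)-p\,M_{\phi}$ picks out exactly the $\phi_i$-contributions to each prolongation $\partial^{\,N-o_i-\gamma-1-k}H_i^{\phi}$), i.e. $M_{\phi}=M_{\phi^h}(L^h)$ is built from $\ps^h(\phi_1,\dots,\phi_n)$. The monomial choices in \eqref{eq-phi} are made so that this matrix is, after permuting rows and columns, triangular with $\pm 1$ on the diagonal: each $\phi_i$ was given just enough terms (one ``leading'' term $u_{n-i-1,o_i-\gamma_{n-i-1}}$ to occupy a high slot and one ``tail'' term $u_{n-i}$, with the boundary cases $i=n-1,n$) so that the prolongations $\partial^k\phi_i$ sweep out a staircase covering all $L^h$ variables of $\cV^h$ exactly once. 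Verifying that this staircase is complete and non-degenerate — equivalently, computing $\gamma(\phi_1,\dots,\phi_n)$ and $o_i(\phi_i)$ and checking the index bookkeeping matches $\cV^h$ — is the combinatorial heart of part~(2), and is really the same bookkeeping already set up in part~(1). Once $\det(M_{\phi})=\pm 1$ is established, the polynomial $\dcres^h(H_1^{\phi},\dots,H_n^{\phi})=\det(M(L^h)-p\,M_{\phi})$ visibly has degree exactly $L^h$ in $p$ and is not identically zero, completing the proof.
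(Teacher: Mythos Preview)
Your plan is essentially the paper's. For part~(1) the paper's entire argument is the one-line observation that $S_n^{\phi}$ has $p$'s on its main diagonal; your more careful bookkeeping arrives at exactly this. For part~(2), the detour through \eqref{eq-Mxi} is unnecessary and does not buy you anything (that identity would require first computing a larger minor of $M_{\phi}(L)$, which is no easier); the paper works directly with $\det(M(L^h)-p\,M_{\phi})$, factors out $p^{L^h}$, and reduces to $\det(M_{\phi})\neq 0$, precisely as you do in the second half of your plan. The one substantive difference is in the structural claim about $M_{\phi}$: the paper does \emph{not} assert triangularity after permutation. Instead it shows that every row and column of $M_{\phi}$ has at most two nonzero entries, and whenever a column has two, one of them is the unique nonzero entry in its row; reordering rows accordingly gives a matrix with ones on the diagonal and at most one off-diagonal nonzero per row, hence a product of elementary matrices. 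So when you carry out the ``staircase'' bookkeeping, aim for that weaker structural statement rather than full triangularity or $\det(M_{\phi})=\pm 1$, which may not hold literally.
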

\begin{proof}
\begin{enumerate}
\item Observe that $S^{\phi}_n$ has $p$'s in the main diagonal.

\item We have $\det (M_{\phi}(L^h))=p^{L^h}\det((1/p)M(L^h)-M_{\phi})$. If we set $y=1/p$ the matrix obtained from $y M(L^h)-\,M_{\phi}$ at $y=0$ is $M_{\phi}$. We will prove that $\det (M_{\phi})\neq 0$ and therefore the degree of  $\det (M_{\phi}(L^h))$ in $p$ is $L^h$.

In each row and column of $M_{\phi}$ there is at least one nonzero entry and at most two. Also, if a column has two nonzero entries at least one of them is the only nonzero entry of its row. Therefore we can reorganize the rows
of $M_{\phi}$ to get a matrix $N$ which has ones in the main diagonal and in every row at most one nonzero entry not in the main diagonal. Namely the $i$-th row of $N$ is either the only row of $M_{\phi}$ with a nonzero entry in the $i$-th column or the row of $M_{\phi}$ with its only nonzero entry in the $i$-th column. Thus $N$ is a product
of elementary matrices and so it has nonzero determinant.
\end{enumerate}
\end{proof}

\begin{thm}
Given a system $\cP (X, U)$ of linear DPPEs there exists a linear perturbation $\phi$ such that
the differential resultant
$\dcres (F_1^{\phi},\ldots ,F_n^{\phi})$ is a nonzero polynomial in $\bbK [p]\{X\}$ and $\det (S^{\phi}_n)\neq 0$.
\end{thm}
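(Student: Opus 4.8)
The plan is to take $\phi$ to be the perturbation defined by \eqref{eq-phi} and then to read off a nonzero coefficient in the expansion \eqref{eq-dResMx} of $\dcres (F_1^{\phi},\ldots ,F_n^{\phi})$. Note first that $\cP_\phi (X,U)$ is again a system of linear DPPEs, this time over the differential field $\bbK_p$, and (as \eqref{eq-phi} is designed to do) the perturbation leaves the orders $o_i$ unchanged, hence also $N$ and $\gamma$; thus all the constructions and identities of Section \ref{subsec-Linear gamma-Differential Resultants}, in particular \eqref{eq-dResMx} and \eqref{eq-Mxi}, apply to $\cP_\phi$ with $\bbK$ replaced by $\bbK_p$ and $\bbD=\bbK_p\{X\}$. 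The entries in the first $L-1$ columns of the complete differential resultant matrix $M_\phi(L)$ of $F_1^{\phi},\ldots ,F_n^{\phi}$ lie in $\bbK[p]$ — each being the coefficient, in $\partial^k(H_i-p\,\phi_i)$, of a derivative of some $u_j$ — while the last column carries the entries $x_{ik}-\partial^k a_i$; hence $\dcres (F_1^{\phi},\ldots ,F_n^{\phi})=\det(M_\phi(L))$ lies in $\bbK[p][\cX]\subseteq\bbK[p]\{X\}$. Since the principal matrix $M_{L-1}$ of $\cP_\phi$ thus contains no variable $x_{ik}$, in \eqref{eq-dResMx} each $x_{ik}$ occurs in exactly one summand, so it suffices to exhibit one pair $(i,k)$ for which $\det(M_{x_{ik}})\neq 0$.

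I would use $i=n$ and $k=N-o_n-\gamma$, which is legitimate because $\gamma\leq N-o_n$ gives $N-o_n-\gamma\geq 0$; by \eqref{eq-dResMx} the coefficient of $x_{n\,N-o_n-\gamma}$ in $\dcres (F_1^{\phi},\ldots ,F_n^{\phi})$ is then $\pm\det(M_{x_{n\,N-o_n-\gamma}})$. If $N\geq 1$, identity \eqref{eq-Mxi} applied to $\cP_\phi$ yields $\det(M_{x_{n\,N-o_n-\gamma}})=(-1)^a\,\dcres^h(H_1^{\phi},\ldots ,H_n^{\phi})\,\det(S_n^{\phi})$ for some $a\in\bbN$; the first factor is a nonzero polynomial in $p$ by Proposition \ref{prop-dResh-nonzero}(2) and the second by Proposition \ref{prop-dResh-nonzero}(1), so this coefficient is nonzero and $\dcres (F_1^{\phi},\ldots ,F_n^{\phi})\neq 0$. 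If $N=0$ then $L=n$ and, exactly as in the proof of Lemma \ref{lem-N0}, $M(L)$ is the $n\times n$ matrix whose principal $n\times(n-1)$ submatrix $M_{L-1}$ is the leading matrix $S^{\phi}$; therefore $M_{x_{n0}}=S_n^{\phi}$ and the coefficient of $x_n$ in $\dcres (F_1^{\phi},\ldots ,F_n^{\phi})$ is $\pm\det(S_n^{\phi})$, again nonzero by Proposition \ref{prop-dResh-nonzero}(1). In both cases $\dcres (F_1^{\phi},\ldots ,F_n^{\phi})$ is a nonzero element of $\bbK[p]\{X\}$, and $\det(S_n^{\phi})\neq 0$ is Proposition \ref{prop-dResh-nonzero}(1), which completes the proof.

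The real content of this theorem is already concentrated in Proposition \ref{prop-dResh-nonzero} — that the tailored perturbation \eqref{eq-phi} forces both $S_n^{\phi}$ and, for $N\geq 1$, the homogeneous matrix $M_\phi(L^h)$ to have nonvanishing determinants — together with the structural identities \eqref{eq-dResMx} and \eqref{eq-Mxi} recorded in Section \ref{subsec-Linear gamma-Differential Resultants}; everything else above is bookkeeping. The one point that genuinely needs separate attention is the degenerate case $N=0$, where $\dcres^h$ is not defined and one must argue directly from the shape of $M(L)$ as in Lemma \ref{lem-N0}; I also expect a short routine check that \eqref{eq-phi} indeed leaves the $o_i$ (and hence $N$, $\gamma$, $L$ and $L^h$) unchanged to be required, but neither of these is a serious obstacle.
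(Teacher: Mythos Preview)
Your proof is correct and follows essentially the same route as the paper: take $\phi$ as in \eqref{eq-phi}, invoke Proposition \ref{prop-dResh-nonzero} for $\det(S_n^\phi)\neq 0$ and (when $N\geq 1$) $\dcres^h(H_1^\phi,\ldots,H_n^\phi)\neq 0$, and treat $N=0$ separately via the explicit shape of $M(L)$. The only cosmetic difference is that for $N\geq 1$ the paper cites the equivalence $\dcres=0\Leftrightarrow\dcres^h=0$ from \cite{RS}, Theorem 18(2), whereas you unpack that equivalence by reading off the coefficient of $x_{n\,N-o_n-\gamma}$ via \eqref{eq-dResMx} and \eqref{eq-Mxi}; since those identities are precisely what underlie the cited equivalence, the arguments are the same in substance.
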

\begin{proof}
Let $\phi$ be the perturbation defined by \eqref{eq-phi} then $\det (S^{\phi}_n)\neq 0$ by Proposition \ref{prop-dResh-nonzero}.
If $N=0$ the result follows from
\[\dcres (F_1^{\phi},\ldots ,F_n^{\phi})=\sum_{i=1}^n (-1)^{i+n} \det(S_i^\phi) (x_i-a_i).\]
If $N\geq 1$ then $\dcres^h (H^{\phi}_1,\ldots ,H^{\phi}_n)\neq 0$ by Proposition \ref{prop-dResh-nonzero}. This is equivalent by \cite{RS}, Theorem 18(2) to $\dcres (F_1^{\phi},\ldots ,F_n^{\phi})\neq 0$.
\end{proof}

If nonzero then $\dcres (F_1^{\phi},\ldots ,F_n^{\phi})$ is a polynomial in $p$ whose coefficients are linear differential polynomials in $\bbK \{X\}$. We focus our attention next in the coefficient of the lowest degree term in $p$ of $\dcres (F_1^{\phi},\ldots ,F_n^{\phi})$.

\begin{thm}
Given a system $\cP (X, U)$ of linear DPPEs let $\phi$ be a linear perturbation  such that $\dcres (F_1^{\phi},\ldots ,F_n^{\phi})\neq 0$ and $\det (S^{\phi}_n)\neq 0$. The following statements hold.
\begin{enumerate}
\item There exists a linear differential polynomial $P$ in $(\ps_{\phi})\cap \bbK_p\{X\}$ with coefficients in $\bbK[p]$ and content in $\bbK$ such that $\alpha_n\neq 0$ where $l(P)=(\alpha_1,\ldots ,\alpha_n)$.

\item  There exists $a\in\bbN$ such that
\begin{equation}\label{eq-formuladRes}
\alpha_{n} \dcres (F_1^{\phi},\ldots ,F_n^{\phi})=(-1)^a \det(S^{\phi}_n) \dcres^h (H^{\phi}_1,\ldots ,H^{\phi}_n) P(X).
\end{equation}
Furthermore $\frac{\det(S_n^{\phi})\dcres^h(H^{\phi}_1,\ldots ,H^{\phi}_n)}{\alpha_n}\in\bbK[p]$.
\end{enumerate}
\end{thm}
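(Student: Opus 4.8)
The plan is to take $R:=\dcres(F_1^{\phi},\ldots,F_n^{\phi})$ itself as the raw object and to obtain $P$ from it by clearing the content of $R$ as a polynomial in $p$. Since $R$ is Macaulay's resultant of $\ps_{\phi}$ it lies in the ideal $(\ps_{\phi})$, and the form of \eqref{eq-dResMx} applied to the perturbed system (over $\bbD=\bbK_p\{X\}$) shows that $R$ involves only the variables of $\cX$; as the $\cV$-columns of $M_{\phi}(L)$ have entries in $\bbK[p]$, expanding the determinant along the last column shows $R\in(\ps_{\phi})\cap\bbK_p\{X\}$ with all its coefficients in $\bbK[p]$. First I would read off $\c(R)$ and $l(R)$ from \eqref{eq-dResMx} and \eqref{eq-Mxi}: the coefficient of $x_{i\,N-o_i-\gamma}$ in $R$ is $b_{i\,N-o_i-\gamma}\det(M_{x_{i\,N-o_i-\gamma}}^{\phi})$, and for $i=n$ this equals, up to sign, $\dcres^h(H_1^{\phi},\ldots,H_n^{\phi})\det(S_n^{\phi})$, which is nonzero since $\det(S_n^{\phi})\neq0$ by hypothesis and, when $N\geq1$, $\dcres^h(H_1^{\phi},\ldots,H_n^{\phi})\neq0$ is forced by $\dcres(F_1^{\phi},\ldots,F_n^{\phi})\neq0$ through \cite{RS}, Theorem~18(2) (for $N=0$ one argues directly from $R=\sum_{i}(-1)^{i+n}\det(S_i^{\phi})(x_i-a_i)$, reading $\dcres^h:=1$). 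Since $R\in(\ps_{\phi})$ gives $\ord(R,x_i)\leq N-o_i-\gamma$ for every $i$ while $\ord(R,x_n)=N-o_n-\gamma$, this yields $\c(R)=0$, whence $l(R)=(\alpha_1^{R},\ldots,\alpha_n^{R})$ with $\alpha_i^{R}$ equal to the coefficient of $x_{i\,N-o_i-\gamma}$ in $R$; in particular $\alpha_n^{R}=(-1)^{a}\det(S_n^{\phi})\dcres^h(H_1^{\phi},\ldots,H_n^{\phi})$ for some $a\in\bbN$ and $\alpha_n^{R}\neq0$.

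Next I would normalize: let $c\in\bbK[p]$ be the content of $R$ viewed as a polynomial in the variables $\cX$ over $\bbK[p]$, and set $P:=R/c$. Then $P$ has coefficients in $\bbK[p]$ and its content is a unit of $\bbK[p]$, i.e. in $\bbK$; and since $c$ is a nonzero element of the field $\bbK_p$, $P=c^{-1}R$ still lies in $(\ps_{\phi})\cap\bbK_p\{X\}$. Writing $P=\sum_i\cL_i(x_i-a_i)$ and multiplying on the left by the zeroth order operator $c$ gives $R=\sum_i(c\cL_i)(x_i-a_i)$, so $c\cL_i$ has the same degree as $\cL_i$ with leading coefficient $c$ times that of $\cL_i$; hence $\ord(R,x_i)=\ord(P,x_i)$ for all $i$, $\c(P)=\c(R)=0$, and $l(R)=c\cdot l(P)$. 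Setting $\alpha_n:=\alpha_n^{P}=\alpha_n^{R}/c$ one gets $\alpha_n\neq0$, proving (1). For (2), from $R=cP$, $\alpha_n^{R}=\alpha_n c$ and $\dcres(F_1^{\phi},\ldots,F_n^{\phi})=R$ one obtains
\[
\alpha_n\,\dcres(F_1^{\phi},\ldots,F_n^{\phi})=\alpha_n\,c\,P=\alpha_n^{R}\,P=(-1)^{a}\det(S_n^{\phi})\dcres^h(H_1^{\phi},\ldots,H_n^{\phi})\,P(X),
\]
which is \eqref{eq-formuladRes}, and then $\dfrac{\det(S_n^{\phi})\dcres^h(H_1^{\phi},\ldots,H_n^{\phi})}{\alpha_n}=\dfrac{(-1)^{a}\alpha_n^{R}}{\alpha_n^{R}/c}=(-1)^{a}c\in\bbK[p]$.

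The main obstacle I anticipate is the first step: one has to be sure that $\ord(R,x_n)$ actually attains the extreme value $N-o_n-\gamma$ (so that $\c(R)=0$ and the leading coefficient of $R$ in $x_{n\,N-o_n-\gamma}$ is governed by \eqref{eq-Mxi}), and it is exactly here that both hypotheses $\det(S_n^{\phi})\neq0$ and $\dcres^h(H_1^{\phi},\ldots,H_n^{\phi})\neq0$ are used; a secondary point to handle with care is that $c\in\bbK[p]$ need not commute with $\partial$, so it is left multiplication by $c$, without reordering the operator $\cL_i$, that gives $l(R)=c\cdot l(P)$.
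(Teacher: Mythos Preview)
Your argument is correct and gives a genuinely different, more constructive proof than the paper's. The paper proceeds abstractly: it invokes Lemma~\ref{lem-G0rev} to get \emph{some} nonzero linear $B\in(\ps_{\phi})\cap\bbK_p\{X\}$, sets $P=c\,\partial^{\c(B)}B$ for a suitable $c\in\bbK_p$ to force co-order $0$ and content in $\bbK$, and then argues $\alpha_n\neq0$ via the kernel of $(S^{\phi})^{T}$ (if $\alpha_n=0$ and $\det(S_n^{\phi})\neq0$, then all $\alpha_i=0$, contradicting $\c(P)=0$); for part~(2) it simply cites \cite{RS}, Theorem~18(1). By contrast, you take the resultant $R$ itself, use the explicit expansions \eqref{eq-dResMx} and \eqref{eq-Mxi} to read off that the coefficient of $x_{n\,N-o_n-\gamma}$ is $\pm\det(S_n^{\phi})\dcres^h(H_1^{\phi},\ldots,H_n^{\phi})\neq0$, hence $\c(R)=0$, and then obtain $P$ by dividing out the $\bbK[p]$-content; formula~\eqref{eq-formuladRes} and the divisibility statement drop out immediately from $R=cP$ and $\alpha_n^R=c\,\alpha_n$. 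Your route is self-contained (no appeal to the external Theorem~18(1)) and identifies the quotient $\det(S_n^{\phi})\dcres^h/\alpha_n$ explicitly as $\pm c$; the paper's route has the advantage that the kernel argument shows $\alpha_n\neq0$ for \emph{any} co-order~$0$ element of $(\ps_{\phi})\cap\bbK_p\{X\}$, not just the one coming from $R$. One minor remark: since $\partial(p)=0$, elements of $\bbK[p]$ with \emph{constant} $\bbK$-coefficients do commute with $\partial$, but you are right that a general $c\in\bbK[p]$ need not, and your handling (left multiplication by the zeroth-order operator $c$) is the correct way to see $l(R)=c\cdot l(P)$.
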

\begin{proof}
\begin{enumerate}
\item By Lemma \ref{lem-G0} there exists a nonzero linear differential polynomial $B\in (\ps_{\phi})\cap \bbK_p\{X\}$. There exists $c\in\bbK_p$ such that $P(X)=c \partial^{\c (B)} B(X)$ has coefficients in $\bbK[p]$ and content in $\bbK$. Observe that $\co (P)=0$.
    Since $P\in \bbK_p\{X\}$ then $\ord(P,u_j)<N-\gamma_j-\gamma$ and by Remark \ref{rem-kerSt}
it holds $l(B)\in\ker ((S^{\phi})^T)$. If $\alpha_n =0$ then $\det (S^{\phi}_n)\neq 0$ implies
$\alpha_i=0$, $i=1,\ldots ,n$. This contradicts that $\c (P)=0$ therefore $\alpha_n \neq 0$.

\item Equation \eqref{eq-formuladRes} follows from \cite{RS}, Theorem 18(1). Since the content of $P(X)$ belongs to $\bbK$ then $\alpha_n$ divides $\det(S_n^{\phi})\dcres^h(H^{\phi}_1,\ldots ,H^{\phi}_n)$ in $\bbK[p]$.
\end{enumerate}
\end{proof}

\begin{rem} In the situation of the previous theorem and with the perturbation $\phi$ defined by \eqref{eq-phi} we can add the following remarks.
For $i=1,\ldots ,n$ by formula \eqref{eq-Mxi}  then $\det(S^{\phi}_i) \dcres^h (H^{\phi}_1,\ldots ,H^{\phi}_n)$ is the coefficient of $x_{i N-o_i-\gamma}$ in $\dcres (F_1^{\phi},\ldots ,F_n^{\phi})$.
By 1 and 2 in the previous theorem then
\[\alpha_n \det(S^{\phi}_i) \dcres^h (H^{\phi}_1,\ldots ,H^{\phi}_n) = \det(S^{\phi}_n) \dcres^h (H^{\phi}_1,\ldots ,H^{\phi}_n)\alpha_i,\]
hence  $\det(S_i^{\phi})=\det(S_n^{\phi})\alpha_i/\alpha_n\in \bbK[p]$.
Let $\alpha=\gcd(\alpha_n, \det(S_n^{\phi}))$. If $\alpha\in\bbK$ then
the degree of $\det(S_i^{\phi})$ in $p$ is greater or equal than $n-2$ which is not possible
then $\gcd(\alpha_n, \det(S_n^{\phi}))\in\bbK[p]\backslash\bbK$.
\end{rem}

Let $D_{\phi}$ be the lowest degree of $p$ in $\dcres (F_1^{\phi},\ldots ,F_n^{\phi})$ and let
$A_{D_{\phi}}$ be the coefficient of $p^{D_{\phi}}$ in $\dcres (F_1^{\phi},\ldots ,F_n^{\phi})$.
 We call $D_{\phi}$ the {\sf degree of the perturbed system} $\cP_{\phi} (X,U)$. Observe that
\[D_{\phi}=0\Leftrightarrow\dcres (F_1,\ldots ,F_n)\neq 0.\]
We write $D_{\phi}=-1$ if $\dcres (F_1^{\phi},\ldots ,F_n^{\phi})= 0$ and so
\[D_{\phi}\geq 0\Leftrightarrow \dcres (F_1^{\phi},\ldots ,F_n^{\phi})\neq 0.\]

In the remaining parts of this section we assume that $\dcres (F_1^{\phi},\ldots ,F_n^{\phi})\neq 0$.
Let $\ps=\ps(F_1,\ldots,F_n)$ and let $\id$ be the implicit ideal of $\cP(X,U)$.
We will use
$\dcres (F_1^{\phi},\ldots ,F_n^{\phi})$ to provide a nonzero $\id$-primitive differential polynomial $A_{\phi}$ in $(\ps)\cap\bbK\{X\}$.

\para

\begin{lem}\label{lem_AD}
The linear differential polynomial $A_{D_{\phi}}$ belongs to $(\ps)\cap\bbK\{X\}$.
\end{lem}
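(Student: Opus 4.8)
The plan is to produce an explicit representation $A_{D_\phi}=\sum_{i=1}^n\cG_{i}(F_i)$ with $\cG_{i}\in\bbK[\partial]$ of degree at most $N-o_i-\gamma$; since $\ps$ contains $\partial^l F_i$ for $0\le l\le N-o_i-\gamma$, such a representation immediately places $A_{D_\phi}$ in the ideal $(\ps)$, and since $A_{D_\phi}$ is a linear differential polynomial in $\bbK\{X\}$ (as noted before the lemma) it yields $A_{D_\phi}\in(\ps)\cap\bbK\{X\}$. To get started I would note that $\dcres(F_1^\phi,\ldots,F_n^\phi)=\det(M_\phi(L))$ lies in the ideal generated by $\ps_\phi$ in $\bbK_p[\cX][\cV]$: writing the elements of $\ps_\phi$ as the coordinates of $M_\phi(L)\cdot\mathbf v$, where $\mathbf v$ lists the monomials in $\cV$ together with a final entry $1$, and left-multiplying by $\mathrm{adj}(M_\phi(L))$, the last coordinate expresses $\det(M_\phi(L))$ as a $\bbK_p[\cX]$-linear combination of the elements of $\ps_\phi$. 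Hence, by the remarks of Section \ref{subsec-diffpolsPS} applied over the differential field $\bbK_p$ to the perturbed system, there are operators $\cG_i^\phi\in\bbK_p[\partial]$ with $\deg\cG_i^\phi\le N-o_i-\gamma$ such that $\dcres(F_1^\phi,\ldots,F_n^\phi)=\sum_{i=1}^n\cG_i^\phi(F_i^\phi)$. Since $F_i^\phi=(x_i-a_i)+H_i^\phi(U)$ and the resultant is free of $U$, this splits (as in Section \ref{subsec-diffpolsPS}) into $\dcres(F_1^\phi,\ldots,F_n^\phi)=\sum_i\cG_i^\phi(x_i-a_i)$ and $\sum_i\cG_i^\phi(H_i^\phi(U))=0$; comparing, in the first identity, the coefficients of the derivatives of each $x_i$ shows that the coefficients of $\cG_i^\phi$ lie in $\bbK[p]$, i.e. $\cG_i^\phi\in\bbK[p][\partial]$.

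Next I would expand $\cG_i^\phi=\sum_{d\ge 0}\cG_{i,d}\,p^d$ with $\cG_{i,d}\in\bbK[\partial]$, $\deg\cG_{i,d}\le N-o_i-\gamma$, and set $\cG_{i,-1}:=0$. As $H_i^\phi=H_i-p\,\phi_i$ and $\partial(p)=0$, comparing coefficients of $p^d$ in the two identities above gives, for all $d\ge 0$,
\[A_d=\sum_{i=1}^n\cG_{i,d}(x_i-a_i),\qquad \sum_{i=1}^n\cG_{i,d}\big(H_i(U)\big)=\sum_{i=1}^n\cG_{i,d-1}\big(\phi_i(U)\big),\]
where $A_d$ is the coefficient of $p^d$ in $\dcres(F_1^\phi,\ldots,F_n^\phi)$. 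For $d=D_\phi$ the minimality of $D_\phi$ forces $A_{D_\phi-1}=0$ (vacuous when $D_\phi=0$), i.e. $\sum_i\cG_{i,D_\phi-1}(x_i-a_i)=0$; since $x_1,\ldots,x_n$ are distinct differential indeterminates over $\bbK$, this forces $\cG_{i,D_\phi-1}=0$ for every $i$, whence $\sum_i\cG_{i,D_\phi}(H_i(U))=\sum_i\cG_{i,D_\phi-1}(\phi_i(U))=0$. Therefore $A_{D_\phi}=\sum_i\cG_{i,D_\phi}(x_i-a_i)=\sum_i\cG_{i,D_\phi}(F_i)$, which is a $\bbK$-linear combination of polynomials $\partial^l F_i\in\ps$ with $0\le l\le N-o_i-\gamma$, so $A_{D_\phi}\in(\ps)\cap\bbK\{X\}$.

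The step I expect to be the main obstacle is securing the normalized representation $\dcres(F_1^\phi,\ldots,F_n^\phi)=\sum_i\cG_i^\phi(F_i^\phi)$ with the order bounds $\deg\cG_i^\phi\le N-o_i-\gamma$ and with the dependence on $p$ made polynomial: this combines the membership $\dcres(F_1^\phi,\ldots,F_n^\phi)\in(\ps_\phi)$, the normalization of Section \ref{subsec-diffpolsPS} applied over $\bbK_p$, and the coefficient comparison that pins the operators down to $\bbK[p][\partial]$; it also uses that the perturbation $\phi$ of \eqref{eq-phi} leaves the orders $o_i$ and the completeness index $\gamma$ unchanged, so that $\ps_\phi\subset\bbK_p[\cX][\cV]$ with the same $\cX$ and $\cV$. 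Once this is in place, the $p$-degree bookkeeping and the algebraic independence of the $x_i$ finish the argument with no further difficulty.
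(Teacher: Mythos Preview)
Your proof is correct and follows essentially the same route as the paper: represent the perturbed resultant via operators $\cG_i^\phi\in\bbK[p][\partial]$ of degree at most $N-o_i-\gamma$, isolate the $\bbK[\partial]$-part at the relevant power of $p$, and check that this part annihilates $\sum_i H_i$. The only organizational difference is that the paper first divides $\dcres(F_1^\phi,\ldots,F_n^\phi)$ by $p^{D_\phi}$ (working over the field $\bbK_p$) and then evaluates the operators at $p=0$, whereas you keep the full $p$-expansion and use the vanishing $A_{D_\phi-1}=0$ to force $\cG_{i,D_\phi-1}=0$; these are equivalent bookkeeping choices.
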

\begin{proof}
By \cite{RS}, Proposition 16 then $\dcres (F_1^{\phi},\ldots ,F_n^{\phi})\in (\ps_{\phi})\cap\bbK_p\{X\}$
and it equals $p^{D_{\phi}} (A_{D_{\phi}}+p A')$ for some $A'\in\bbK_p\{X\}$. Therefore the linear polynomial
$A_{D_{\phi}}+p A'\in (\ps_p)\cap\bbK_p\{X\}$ and there exist $\cF_i\in\bbK_p[\partial]$ with $\deg(\cF_i)\leq N-o_i-\gamma$
such that $A_{D_{\phi}}(X)+p A'(X)=\sum_{i=1}^n \cF_i(F_i^{\phi}(X,U))$. Then $A_{D_{\phi}}(X)+p A'(X)=\sum_{i=1}^n \cF_i(x_i-a_i)$
and $\sum_{i=1}^n \cF_i(H_i^{\phi}(U))=0$.

For each $i\in\{1,\ldots ,n\}$ there exists $\cL_i\in\bbK[\partial]$ and $\cF'_i\in\bbK_p[\partial]$ such that $\cF_i=\cL_i+p\cF'_i$. Then $A_{D_{\phi}}(X)+p A'(X)=\sum_{i=1}^n \cL_i(x_i-a_i)+p\sum_{i=1}^n \cF'_i(x_i-a_i)$ and hence $A_{D_{\phi}}(X)=\sum_{i=1}^n \cL_i(x_i-a_i)$. On the other hand
\[0=\sum_{i=1}^n \cF_i(H_i^{\phi}(U))= \sum_{i=1}^n \cL_i(H_i(U))+ p\sum_{i=1}^n \cL_i(\phi_i(U))+p\sum_{i=1}^n \cF'_i(H_i^{\phi}(U))\]
which implies $\sum_{i=1}^n \cL_i(H_i(U))=0$. Thus
$A_{D_{\phi}}(X)=\sum_{i=1}^n \cL_i(F_i(X,U))$
with $\deg(\cL_i)\leq N-o_i-\gamma$ then $A_{D_{\phi}}\in (\ps)\cap\bbK\{X\}$.
\end{proof}

Let $A_{\phi}$ be the $\id$-primitive part of $A_{D_{\phi}}$. We call $A_{\phi}$ the {\sf differential polynomial associated to} $\cP_{\phi} (X,U)$.
We relate $D_{\phi}$ with $\c (A_{\phi})$ and give conditions on $D_{\phi}$ for $A_{\phi}(X)=0$ to be the implicit equation of $\cP (X,U)$.

\begin{rem} Let $\cP_{\phi} (X,U)$ and $\cP_{\psi} (X,U)$ be two different linear perturbations of $\cP(X,U)$ with degrees $D_{\phi}\geq 0$ and  $D_{\psi}\geq 0$. Let $A_{\phi}$ and $A_{\psi}$ be the associated differential polynomials.
\begin{enumerate}
\item As illustrated in Example 1 of Section \ref{sec-Implicitization algorithm} the degrees $D_{\phi}$ and $D_{\psi}$ may be different.

\item If $\dim\id=n-1$ then $A_{\phi}=\gamma A_{\psi}$ for some $\gamma\in\bbK$.
\end{enumerate}
\end{rem}

\begin{thm}\label{thm-D}
Let $\cP_{\phi} (X,U)$ be a perturbed system of the system $\cP (X,U)$ of degree $D_{\phi}\geq 0$. Let $\cG$ be the Groebner basis associated to $\cP (X,U)$ and $\cG_0=\cG\cap \bbK\{X\}$. Then $|\cG_0|-1\leq D_{\phi}$.
\end{thm}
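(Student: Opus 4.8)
The goal is to show $|\cG_0| - 1 \le D_{\phi}$, i.e. that the degree of the perturbed system is at least the number of elements of $\cG_0$ minus one. The natural strategy is to relate both quantities to a chain of linear differential polynomials in $\id$ whose co-orders witness the size of $\cG_0$, and then to show that the perturbed differential resultant $\dcres(F_1^{\phi},\ldots,F_n^{\phi})$, viewed as a polynomial in $p$, must be divisible by $p^{|\cG_0|-1}$. First I would set $m = |\cG_0| - 1$ and recall from Lemma~\ref{lem-G0}(1) that $m = L - \rank(M_{L-1}) - 1$, and that by the proof of Lemma~\ref{lem-G0}(2), given any nonzero linear $B \in (\cG_0)$ we have $m \ge \c(B)$; conversely, if $B_0$ is the least element of $\cG_0$ (Lemma~\ref{lem-G0rev}), then Theorem~\ref{thm-nec-coA}-style reasoning (which does not need $\dim \id = n-1$) gives that $(\cG_0)$ contains $B_0, \partial B_0, \ldots, \partial^{m'} B_0$ reduced forms for $m' = \c(B_0)$, so in fact one can arrange $m = \c(B_0)$ for a suitable generator $B_0$ of $(\cG_0)$ with co-order exactly $m$.

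The core of the argument is a rank computation for the perturbed principal matrix. Let $M_{L-1}^{\phi}$ be the principal matrix of $\cP_{\phi}(X,U)$; its entries lie in $\bbK[p]$ and $M_{L-1}^{\phi} = M_{L-1} - p\, M'$ for some matrix $M'$ over $\bbK$. The key claim is that the coefficient of the lowest power of $p$ in $\dcres(F_1^{\phi},\ldots,F_n^{\phi})$ — which by Lemma~\ref{lem_AD} is $A_{D_{\phi}} \in (\ps) \cap \bbK\{X\}$ — forces $D_{\phi} \ge m$. To see this I would argue by the divisibility structure: the determinant $\det(M(L)^{\phi})$ expanded as in \eqref{eq-dResMx} is a sum over $\cX$ of minors $\det(M_{x_{ik}}^{\phi})(x_{ik} - \partial^k a_i)$, and each such minor is a polynomial in $p$. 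Over the field $\bbK$ (at $p=0$) the matrix $M_{L-1}$ has corank $m+1$ (since $L - \rank(M_{L-1}) = |\cG_0| = m+1$), so $M_{L-1}^{\phi}$ has corank $\ge m+1$ when $p=0$; hence every $(L-1)\times(L-1)$ minor of $M_{L-1}^{\phi}$, and in particular every $\det(M_{x_{ik}}^{\phi})$, vanishes to order $\ge m$ in $p$ — because a matrix of the form $B - pC$ with $\rank_{\bbK} B = (L-1) - (m+1) + \text{(something)}$... more precisely: if the $L \times (L-1)$ matrix $M_{L-1}$ has rank $L - 1 - m$ over $\bbK$ (equivalently $L - \rank(M_{L-1}) = m+1$), then every maximal minor of $M_{L-1} - pM'$ is divisible by $p^{m}$, since specializing $p = 0$ makes all of them drop rank by at least $m$ simultaneously. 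Thus each coefficient $\det(M_{x_{ik}}^{\phi})$ in \eqref{eq-dResMx} for the perturbed system is divisible by $p^m$, and therefore $\dcres(F_1^{\phi},\ldots,F_n^{\phi})$ is divisible by $p^m$, which gives $D_{\phi} \ge m = |\cG_0| - 1$.

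The main obstacle I anticipate is the linear-algebra lemma that every maximal minor of $M_{L-1} - pM'$ is divisible by $p^m$ when $\rank_{\bbK}(M_{L-1}) = L - 1 - m$. The naive statement "corank $c$ at $p=0$ implies all maximal minors vanish to order $c-1$" is false in general for non-square matrices or for arbitrary perturbations; what is true and what I would need is the weaker but sufficient statement that the minimal valuation of the maximal minors is at least $(L-1) - \rank_{\bbK}(M_{L-1}) = m$. This follows because the Smith-normal-form / rank stratification of $\bbK[p]^{L \times (L-1)}$ gives that if the matrix has rank $L-1$ over $\bbK(p)$ but rank only $L-1-m$ over $\bbK = \bbK[p]/(p)$, then the product of the invariant factors — which is (up to unit) the gcd of the maximal minors — is divisible by $p^m$. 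I would need to handle the degenerate case where $\dcres(F_1^{\phi},\ldots,F_n^{\phi})$ itself might have rank issues, but that is excluded by the standing assumption $D_{\phi} \ge 0$, i.e. the perturbed resultant is nonzero, so the maximal minors do not all vanish identically and the valuation statement is exactly what pins down $D_{\phi} \ge m$.
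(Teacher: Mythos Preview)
Your core argument is correct and follows essentially the same route as the paper: expand $\dcres(F_1^{\phi},\ldots,F_n^{\phi})$ via \eqref{eq-dResMx} and show each $(L-1)\times(L-1)$ minor $\det(M^{\phi}_{x_{ik}})$ is divisible by $p^{m}$ with $m=L-1-\rank(M_{L-1})=|\cG_0|-1$; the paper does this by explicit row-reduction of each unperturbed minor (turning $L-1-r_{ik}$ rows to zero, hence to multiples of $p$ after perturbation), while your Smith-normal-form/invariant-factor argument is a valid and slightly more abstract packaging of the same linear algebra. Your first paragraph (the discussion of $\c(B_0)$ and ``Theorem~\ref{thm-nec-coA}-style reasoning'') is unnecessary for the proof and the parenthetical claim that such reasoning does not need $\dim\id=n-1$ is not correct as stated---but since you never actually use it, this does not affect the validity of the argument.
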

\begin{proof}
Let $M_{\phi}(L)$ be the differential resultant matrix of $F_1^{\phi}, \ldots ,F_n^{\phi}$ and $M^{\phi}_{L-1}$ the $L\times (L-1)$ principal submatrix of $M_{\phi}(L)$.
 By equation \eqref{eq-dResMx} then
 \[\dcres (F_1^{\phi},\ldots ,F_n^{\phi})=\sum_{i=1}^n \sum_{k=0}^{N-o_i-\gamma} b_{ik} \det (M^{\phi}_{x_{ik}}) (x_{ik}-\partial^k a_i),\] 
 with $M^{\phi}_{x_{ik}}$ an $(L-1)\times (L-1)$ submatrix of $M_{L-1}^{\phi}$ and $b_{ik}=\pm 1$ according to the row index of $x_{ik}-\partial^k a_i$ in the matrix $M_{\phi}(L)$. Let $M_{x_{ik}}$ be the $(L-1)\times (L-1)$ submatrix of the principal matrix $M_{L-1}$ of $\cP (X,U)$.
 Let $r_{ik}=\rank(M_{x_{ik}})$, then there exists an invertible matrix $E_{ik}$ of order $L-1$ and entries in $\bbK$ such that the last $L-1-r_{ik}$ rows of $E_{ik} M_{x_{ik}}$ are zero. If we divide each one of the last $L-1-r_{ik}$ rows of $E_{ik} M^{\phi}_{x_{ik}}$ by $p$ we obtain a matrix $N^{\phi}_{ik}$ such that
 \[\det M_{\phi}(L)=\sum_{i=1}^n \sum_{k=0}^{N-o_i-\gamma} b_{ik} p^{L-1-r_{ik}}\det (N^{\phi}_{ik}) (x_{ik}-\partial^k a_i).\]
 This proves that $L-1-r_{ik}\leq D_{\phi}$ for all $i=1,\ldots ,n$ and $k=0,1,\ldots ,N-o_i-\gamma$.
 Now $\rank (M_{L-1})\geq r_{ik}$ so \[|\cG_0|-1=L-\rank (M_{L-1})-1\leq L-1-r_{ik}\leq D_{\phi}.\]
\end{proof}

We showed that $D_{\phi}\geq |\cG_0|-1=L-\rank (M_{L-1})-1\geq\c (A_{\phi})$ and
in general equality does not hold ( see examples in Section \ref{sec-Implicitization algorithm}).

\begin{cor}
Let $\cP (X,U)$ be a system of linear DPPEs with implicit ideal $\id$ and leading matrix $S$.
Let $\cP_{\phi} (X,U)$ be a perturbed system of $\cP (X,U)$ of degree $D_{\phi}\geq 0$. Let $A_{\phi}$ be the differential polynomial associated to $\cP_{\phi} (X,U)$. If $\rank (S)=n-1$ and $D_{\phi}=\c (A_{\phi})$ then $\id$ has dimension $n-1$ and $A_{\phi}(X)=0$ is the implicit equation of $\cP (X,U)$.
\end{cor}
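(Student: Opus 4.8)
The plan is to reduce the statement to Corollary~\ref{cor-dim}, more precisely to its characterization~(2) of the $(n-1)$-dimensional case. Since the hypothesis already supplies $\rank(S)=n-1$, it suffices to exhibit one nonzero linear $\id$-primitive differential polynomial whose co-order in $(\ps)$ is exactly $L-\rank(M_{L-1})-1$, and the natural candidate is $A_{\phi}$ itself. The whole argument then amounts to forcing the chain of inequalities $\c(A_{\phi})\le L-\rank(M_{L-1})-1\le D_{\phi}$, already recorded just before this corollary, to collapse into a chain of equalities once the extra assumption $D_{\phi}=\c(A_{\phi})$ is imposed.

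First I would check that $A_{\phi}$ is an admissible candidate. By Lemma~\ref{lem_AD} the linear differential polynomial $A_{D_{\phi}}$ lies in $(\ps)\cap\bbK\{X\}$, and it is nonzero because it is the lowest nonvanishing coefficient, in $p$, of the (nonzero, since $D_{\phi}\ge 0$) polynomial $\dcres(F_1^{\phi},\ldots,F_n^{\phi})$. As $(\ps)\cap\bbK\{X\}\subseteq[\ps]\cap\bbK\{X\}=\id$, the $\id$-primitive part $A_{\phi}=\dprim(A_{D_{\phi}})$ is well defined, nonzero, linear and $\id$-primitive by construction, and by the lemma of Subsection~\ref{subsec-diffpolsPS} it still belongs to $(\ps)\cap\bbK\{X\}$, which by Lemma~\ref{lem-G0rev} equals $(\cG_0)$.

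Next I would assemble the numerical estimates. Applying Lemma~\ref{lem-G0}(2) to the nonzero linear polynomial $A_{\phi}\in(\cG_0)$ gives $|\cG_0|\ge\c(A_{\phi})+1$, while Theorem~\ref{thm-D} gives $|\cG_0|-1\le D_{\phi}$; together with the hypothesis $D_{\phi}=\c(A_{\phi})$ this yields
\[
\c(A_{\phi})\;\le\;|\cG_0|-1\;\le\;D_{\phi}\;=\;\c(A_{\phi}),
\]
so the three quantities coincide and, by Lemma~\ref{lem-G0}(1), $L-\rank(M_{L-1})=|\cG_0|=\c(A_{\phi})+1$. Combined with $\rank(S)=n-1$ this is exactly statement~(2) of Corollary~\ref{cor-dim} for the polynomial $A=A_{\phi}$; hence $\dim\id=n-1$ and $A_{\phi}(X)=0$ is the implicit equation of $\cP(X,U)$.

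I do not expect a serious obstacle here: the genuine work is already contained in Theorem~\ref{thm-D} (the bound $|\cG_0|-1\le D_{\phi}$) and in the sufficiency results packaged in Corollary~\ref{cor-dim}. The points to handle with care are purely bookkeeping: verifying that $A_{\phi}$ is nonzero, linear, $\id$-primitive and an element of $(\ps)\cap\bbK\{X\}=(\cG_0)$, and observing that the co-order invoked in Lemma~\ref{lem-G0}(2) is literally the same quantity $\c(A_{\phi})$ appearing in the hypothesis $D_{\phi}=\c(A_{\phi})$, since the co-order in $(\ps)$ depends only on the polynomial.
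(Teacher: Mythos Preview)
Your proof is correct and follows essentially the same route as the paper: combine Theorem~\ref{thm-D} with Lemma~\ref{lem-G0}(2) to force $|\cG_0|=\c(A_\phi)+1$, then invoke Corollary~\ref{cor-dim}. You supply more detail than the paper does in verifying that $A_\phi$ is a nonzero linear $\id$-primitive element of $(\cG_0)$, but the underlying argument is identical.
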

\begin{proof}
If $D_{\phi}=\c (A_{\phi})$ then by Theorem \ref{thm-D} we have $|\cG_0|\leq \c (A_{\phi})+1$. By Lemma \ref{lem-G0} then $|\cG_0|= \c (A_{\phi})+1$ and by Corollary \ref{cor-dim} the result follows.
\end{proof}

\section{Implicitization algorithm for linear DPPEs and examples}\label{sec-Implicitization algorithm}

Let $\cP (X,U)$ be a system of linear DPPEs with implicit ideal $\id$.
In this section, we give an algorithm that decides whether the dimension of $\id$ is $n-1$ and in the affirmative case returns the implicit equation of $\cP (X,U)$. For this purpose let $S$ and $M_{L-1}$ be the leading and principal matrices of $\cP (X,U)$ respectively.  Let $\cP_{\phi} (X,U)$ be a perturbed system of $\cP (X,U)$ of degree $D_{\phi}\geq 0$. Let $A_{D_{\phi}}$ be the coefficient of $p^{D_{\phi}}$ in $\dcres (F_1^{\phi},\ldots ,F_n^{\phi})$ and $A_{\phi}$ the differential polynomial associated to $\cP_{\phi} (X,U)$.

\begin{alg}
\begin{itemize}
\item \underline{\sf Given}  the system $\cP (X,U)$ of linear DPPEs.
\item \underline{\sf Decide} whether the dimension is $n-1$ and in the affirmative case \item
\underline{\sf Return} a characteristic polynomial of $\id$.
\end{itemize}
\begin{enumerate}
\item Compute $\rank (S)$.

\item If $\rank (S)<n-1$ {\sf RETURN} ``{\sf dimension less than $n-1$}".

\item Compute $\cP_{\phi} (X,U)$ with perturbation $\phi$ given by \eqref{eq-phi}.

\item Compute $\dcres (F_1^{\phi},\ldots ,F_n^{\phi})$, $D_{\phi}$ and $A_{D_{\phi}}$.

\item If $D_{\phi}=0$ {\sf RETURN} $A_{D_{\phi}}$.

\item Compute $A_{\phi}$ and $\c (A_{\phi})$.

\item If $D_{\phi}=\c (A_{\phi})$ {\sf RETURN} $A_{\phi}$.

\item Compute $\rank (M_{L-1})$.

\item If $L-\rank (M_{L-1})>\c (A_{\phi})+1$ {\sf RETURN} ``{\sf dimension less than $n-1$}".

\item If $L-\rank (M_{L-1})=\c (A_{\phi})+1$ {\sf RETURN} $A_{\phi}$.
\end{enumerate}
\end{alg}

The next examples were computed in Maple. The computation of differential resultants was carried out with our Maple implementation
of the linear complete differential resultant, available at \cite{R}.

\subsection{Example 1}\label{subsec-ex1}

Let $\bbK=\bbQ$, $\partial =\frac{\partial}{\partial t}$ and
consider the system $\cP (X,U)$ of linear DPPEs providing the set of differential polynomials in
$\bbK\{x_1,x_2,x_3\}\{u_1,u_2\}$,
\begin{align*}
F_1 (X,U)&=x_1+u_1-u_2+u_{11}-u_{12}-4u_{21}-3u_{22},\\
F_2 (X,U)&=x_2+u_2+u_{11}-u_{22},\\
F_3 (X,U)&= x_3+u_2+u_{11}+u_{21}.
\end{align*}
The set $\ps(F_1,F_2,F_3)$ contains $L=13$ differential
polynomials and $\gamma=0$.
The leading matrix $S$ of $\cP(X,U)$ has rank $2$ and equals
\[
S=\left[
{\begin{array}{rr}
-3 & 1 \\
-1 & 0 \\
1 & 1
\end{array}}
 \right].
\]
We consider the perturbation $\phi=(\phi_1(U),\phi_2(U),\phi_3(U))$ with
\begin{equation*}
\phi_i (U)=\left\{\begin{array}{ll}
u_{12}+u_2,& i=1,\\
u_1,& i=2,\\
u_{21},& i=3.
\end{array}\right.
\end{equation*}

There exists a differential polynomial $P(X)\in (\ps)\cap\bbK\{X\}$ with coefficients in $\bbK [p]$ and content in $\bbK$ such that the determinant of the $13\times 13$ matrix $M_{\phi} (13)$ equals
\begin{align*}
&\dcres (F_1^{\phi},F_2^{\phi},F_3^{\phi})=\dcres^h (H_1^{\phi},H_2^{\phi},H_3^{\phi}) P(X)=\\
&p\,(1 + 4\,p + 4\,p^{4} - p^{5} + 2\,p^{7} + 11\,p^{3} + p^{9} - 12
\,p^{2} - 4\,p^{6} + p^{8}) P(X).
\end{align*}
Then $D_{\phi}=1$ and the coefficient of $p$ in $\dcres (F_1^{\phi},F_2^{\phi},F_3^{\phi})$ is
\begin{align*}
&A_{D_{\phi}}={x_{12}} - {x_{2}} - 2\,{x_{21}} - 2\,{x_{22}} + {x_{33}} +
{x_{32}} + {x_{31}} + {x_{3}}.
\end{align*}
We have $A_{D_\phi}=\cL_1(x_1)+\cL_2(x_2)+\cL_3(x_3)$ with
\begin{align*}
\cL_1&={\partial}^{2} + {\partial}^{3}={\partial}^{2}\,(1 + {\partial}),\\
\cL_2&= - 1 - 3\,{\partial} - 4\,{\partial}^{
2} - 2\,{\partial}^{3}= - ({\partial} + 1)\,(2\,{\partial}^{2} + 2\,{\partial} + 1),\\
\cL_3&= 1 + 2\,{\partial} + 2\,{\partial}^{2}
 + 2\,{\partial}^{3} + {\partial}^{4}=({\partial}^{2} + 1)\,({\partial} + 1)^{2}.
\end{align*}
Therefore $\cL=\gcld (\cL_1,\cL_2,\cL_3)=1+\partial$ and $A_{\phi}={x_{12}} - {x_{2}} - 2\,{x_{21}} - 2\,{x_{22}} + {x_{33}} +
{x_{32}} + {x_{31}} + {x_{3}}$ with
$\c (A_{\phi})=1$. Then $D_{\phi}=\c (A_{\phi})$.
We conclude that the dimension of $\id$
is $n-1=2$ and its implicit equation $A_{\phi} (X)=0$.

If we consider the perturbation $\psi=(\psi_1(U),\psi_2(U),\psi_3(U))$ with
\begin{equation*}
\psi_i (U)=\left\{\begin{array}{ll}
u_{22}+u_1,& i=1,\\
u_2,& i=2,\\
u_{11},& i=3.
\end{array}\right.
\end{equation*}
There exists a differential polynomial $P(X)\in (\ps)\cap\bbK\{X\}$ with coefficients in $\bbK [p]$ and content in $\bbK$ such that the determinant of the $13\times 13$ matrix $M_{\psi} (13)$ equals
\begin{align*}
&\dcres (F_1^{\psi},F_2^{\psi},F_3^{\psi})=\dcres^h (H_1^{\psi},H_2^{\psi},H_3^{\psi}) P(X)=\\
&- p^{2}\,(1 + p)\,(p^{7} + p^{6} - 17\,p^{5} - 53\,p^{4} - 60\,p
^{3} - 33\,p^{2} - 9\,p - 1) P(X).
\end{align*}
Then $D_{\psi}=2$ but the coefficient of $p^2$ is $A_{D_{\psi}}=A_{D_{\phi}}$. Therefore $A_{\psi}=A_{\phi}$ as expected.

\subsection{Example 2}\label{subsec-ex1}

Let $\bbK=\bbQ$, $\partial =\frac{\partial}{\partial t}$ and
consider the system $\cP (X,U)$ of linear DPPEs providing the set of differential polynomials in
$\bbK\{x_1,x_2,x_3,x_4\}\{u_1,u_2,u_3\}$,
\begin{align*}
F_1 (X,U)&= x_1-2u_1+u_3-3u_{21}+u_{31},\\
F_2 (X,U)&= x_2+2u_1-u_3-u_{11}+3u_{22}-u_{32},\\
F_3 (X,U)&= x_3+2u_1-u_3+2u_{21}+u_{32},\\
F_4 (X,U)&= x_4+2u_1-u_3+3u_{21}-2u_{31}.
\end{align*}
The set $\ps(F_1,F_2,F_3,F_4)$ contains $L=18$ differential
polynomials and $\gamma=\gamma_1=1$.
The leading matrix $S$ of $\cP(X,U)$ has rank $3$ and equals
\[
S=\left[
{\begin{array}{rrr}
1 & -3 & -2 \\
-1 & 3 & -1 \\
1 & 0 & 0 \\
-2 & 3 & 2
\end{array}}
 \right].
\]
We consider the perturbation $\phi=(\phi_1(U),\phi_2(U),\phi_3(U))$ with
\begin{equation*}
\phi_i (U)=\left\{\begin{array}{ll}
u_{21}+ u_3,& i=1,\\
u_{11}+ u_2,& i=2,\\
u_1,& i=3,\\
u_{31},& i=4.
\end{array}\right.
\end{equation*}
There exists a differential polynomial $P(X)\in (\ps)\cap\bbK\{X\}$ with coefficients in $\bbK [p]$ and content in $\bbK$ such that the determinant of the $18\times 18$ matrix $M_{\phi} (18)$ equals
\begin{align*}
&\dcres (F_1^{\phi},F_2^{\phi},F_3^{\phi} ,F_4^{\phi})=\dcres^h (H_1^{\phi},H_2^{\phi},H_3^{\phi} ,H_4^{\phi}) P(X)=\\
&p^3\,(-2p^9-3944p^4+789p^2-481p^7 -379p^6+108+4484p^5+212p^8+p^{11}\\
&-642p+527p^3-7p^{10}) P(X).
\end{align*}
Then $D_{\phi}=3$ and the coefficient of $p^3$ in $\dcres (F_1^{\phi},F_2^{\phi},F_3^{\phi} ,F_4^{\phi})$ is
\begin{align*}
&A_{D_{\phi}}=\\
&972x_{12}-864x_{13}-972x_{14}-216x_{22}+648x_{32}-972x_{33}+540x_{42}-972x_{44}.
\end{align*}
We have $A_D=\cL_1(x_1)+\cL_2(x_2)+\cL_3(x_3)+\cL_4(x_4)$ with
\begin{align*}
\cL_1&=-864\partial^3+972\partial^2-972\partial^4=-108\partial^2(8\partial-9+9\partial^2), \\
\cL_2&= -216\partial^2,\\
\cL_3&= -972\partial^3+648\partial^2=-324\partial^2(3\partial-2),\\
\cL_4&= 540\partial^2-972\partial^4=-108\partial^2(-5+9\partial^2).
\end{align*}
Therefore $\cL=-108\partial^2$ and $A_{\phi}=8x_{11}+9x_{12}-9x_{1}+2x_{2}-6x_{3}+9x_{31}+9x_{42}-5x_{4}$ with
$\co (A_{\phi})=2$. Then $D_{\phi}>\c (A_{\phi})$.

Replace $p$ in $M_{\phi}(18)$ by zero to obtain $M(18)$ whose principal $18\times 17$ submatrix is $M_{L-1}$.
Compute $L-\rank (M_{L-1})=3$. Then $\c (A_{\phi})+1=L-\rank (M_{L-1})$. We conclude that the dimension of $\id$
is $n-1=3$ and its implicit equation $A_{\phi} (X)=0$.

\subsection{Example 3}\label{subsec-ex1}

Let $\bbK=\bbQ(t)$, $\partial =\frac{\partial}{\partial t}$ and
consider the system $\cP (X,U)$ of linear DPPEs providing the set of differential polynomials in
$\bbK\{x_1,x_2,x_3\}\{u_1,u_2\}$,
\begin{align*}
{F_{1}} &= {x_1} - 3 + {{u_{11}}} + {{u_{12}}}- {u_2} - 4\,{{u_{21}}}
 - 3\,{{u_{22}}}, \\
{F_{2}} &= {x_2} + {{u_{11}}}+ {u_2} - {{u_{22}}}, \\
{F_{3}} &= {x_3} + 2 + {{u_{11}}}+ t\,{u_2} + {{u_{21}}}.
\end{align*}
Then the set $\ps(F_1,F_2,F_3)$ contains $L=13$ differential
polynomials and $\gamma=0$.
The leading matrix $S$ of $\cP(X,U)$ has rank $2$ and equals
\[
S=\left[
{\begin{array}{rr}
-3 & 1 \\
-1 & 0 \\
1 & 1
\end{array}}
 \right].
\]
We consider the perturbation $\phi=(\phi_1(U),\phi_2(U),\phi_3(U))$ with
\begin{equation*}
\phi_i (U)=\left\{\begin{array}{ll}
u_{12}+u_2,& i=1,\\
u_1,& i=2,\\
u_{21},& i=3.
\end{array}\right.
\end{equation*}
There exists a differential polynomial $P(X)\in (\ps)\cap\bbK\{X\}$ with coefficients in $\bbK [p]$ and content in $\bbK$ such that the determinant of the $13\times 13$ matrix $M_{\phi} (13)$ equals $\dcres (F_1^{\phi},F_2^{\phi},F_3^{\phi})=p\,P(X)$. In this case $\alpha_3\,p=\det(S_3^{\phi})\dcres^h(H^{\phi}_1,H^{\phi}_2 ,H^{\phi}_3)$ where $l(P)=(\alpha_1,\alpha_2,\alpha_3)$ is the leading vector of $P$.
Then $D_{\phi}=1$ and the coefficient $A_{D_{\phi}}$ of $p$ in $\dcres (F_1^{\phi},F_2^{\phi},F_3^{\phi})$ equals
\[A_{D_{\phi}}=\cL_1(x_1-3)+\cL_2(x_2)+\cL_3(x_3+2)\] with
\begin{align*}
\cL_1&= - 64\,t^{2} - 2656 + 912\,t - 8\,t^{3} + (468 +
57\,t^{3} + 33\,t^{2} - t^{5} - 13\,t^{4} - 522\,t)\,{\partial} \\
&\mbox{} + ( - 364 + 79\,t^{2} - 6\,t - 10\,t^{3} - t^{4})\,{\partial}^{2} \\
&\mbox{} + (68\,t + 37\,t^{2} - 14\,t^{3} - t^{4} - 296)\,{\partial}^{3},\\
\cL_2&= (730\,t + 860 - 37\,t^{3} - 229\,t^{2} + 15\,t^{4}
 + t^{5})\,\partial\\
&\mbox{} + (252\,t + 19\,t^{4} + t^{5} + 1752 - 421\,t^{2} + 17\,t
^{3})\,\partial^{2}\\
&\mbox{} + (56\,t^{3} - 272\,t - 148\,t^{2} + 4\,t^{4} + 1184)\,
\partial^{3},\\
\cL_3&= - 16\,t^{2} + 228\,t - 2\,t^{3} - 664 + ( - 2\,t
^{4} + 212\,t^{2} - 436\,t - 18\,t^{3} - 664)\,{\partial}\\
&\mbox{} + ( - 1856 - 64\,t^{3} + 309\,t^{2} - 5\,t^{4} + 276\,t)\,{\partial}^{2} \\
&\mbox{} + ( - 2\,t^{4} + 210\,t + 32\,t^{2} - 524 - 32\,t^{3})\,{\partial}^{3} \\
&\mbox{} + ( - 68\,t - 37\,t^{2} + 14\,t^{3} + t^{4} + 296)\,{\partial}^{4}.
\end{align*}
Using the Maple package OreTools we check that $(\cL_1,\cL_2,\cL_3)=1$, then $A_{D_{\phi}}=A_{\phi}$ and $\co (A_{\phi})=0$.
Replace $p$ in $M_{\phi}(13)$ to obtain $M(13)$ whose principal $13\times 12$ submatrix is $M_{L-1}$.
Compute $L-\rank (M_{L-1})=2$ which is greater than $\c (A_{\phi})+1=1$. We conclude that the dimension of $\id$
smaller than $n-1=2$.

If we take a different perturbation $\psi=(\psi_1(U),\psi_2(U),\psi_3(U))$ with
\begin{equation*}
\psi_i (U)=\left\{\begin{array}{ll}
u_{22}+u_1,& i=1,\\
u_2,& i=2,\\
u_{11},& i=3.
\end{array}\right.
\end{equation*}
We obtain $D_{\psi}=1$ and the coefficient of $p$ in $\dcres (F_1^{\psi},F_2^{\phi},F_3^{\psi})$ is
\[A_{D_{\psi}}=\cK_1(x_1-3)+\cK_2(x_2)+\cK_3(x_3+2)\] with
\begin{align*}
\cK_1&= ( - 76\,t^{3} + 154\,t^{2} + 12\,t - 156 + 11\,t
^{4} + t^{5})\,\partial \\
&\mbox{} + (204\,t - 100 + t^{4} + 8\,t^{3} - 94\,t^{2})\,
\partial^{2} \\
&\mbox{} + ( - 58\,t^{2} + 12\,t^{3} + 88\,t - 12 + t^{4})\,
\partial^{3} ,\\
\cK_2&= - 228\,t + 16\,t^{2} + 664 + 2\,t^{3} + ( - t^{5
} - 904\,t + 820 + 62\,t^{3} + 90\,t^{2} - 13\,t^{4})\,{\partial} \\
&\mbox{} + ( - t^{5} - 17\,t^{4} + 346\,t^{2} - 892\,t + 412 + 14
\,t^{3})\,{\partial}^{2} \\
&\mbox{} + (232\,t^{2} - 352\,t - 4\,t^{4} - 48\,t^{3} + 48)\,
{\partial}^{3},\\
\cK_3&=  - 228\,t + 16\,t^{2} + 664 + 2\,t^{3} + (14\,t^{
3} + 892\,t - 664 + 2\,t^{4} - 244\,t^{2})\,{\partial} \\
&\mbox{} + ( - 156 - 406\,t^{2} + 54\,t^{3} + 676\,t + 5\,t^{4})\,
{\partial}^{2} \\
&\mbox{} + ( - 80\,t^{2} + 28\,t^{3} + 2\,t^{4} + 64 + 60\,t)\,
{\partial}^{3} \\
&\mbox{} + (58\,t^{2} - 88\,t - t^{4} - 12\,t^{3} + 12)\,{\partial}^{4}.
\end{align*}
Using the Maple package OreTools we check that $(\cK_1,\cK_2,\cK_3)=1$, then $A_{D_{\psi}}=A_{\psi}$ and $\c (A_{\psi})=0$.
Observe that there is no $\gamma\in\bbK$ such that $A_{\phi}= \gamma A_{\psi}$.

\vspace{0.5cm}{\bf \noindent Acknowledgements.}
The author thanks J. Rafael Sendra for kindly reading and commenting on some parts of this work.
\vspace{0.5cm}

\end{document}